\newtheorem{theorem}{Theorem}[section]
\newtheorem{definition1}{Definition}[section]
\newtheorem{observe}{Observation}[section]
\newtheorem{remark1}[observe]{Remark}
\newtheorem{example1}{Example}[section]
\newtheorem{aside1}[observe]{Aside}
\newenvironment{definition}[1][]{\begin{definition1}[#1] \rm}{\end{definition1}}
\newenvironment{remark}{\begin{remark1} \rm}{\end{remark1}}
\def\qed{\hfill$\blacksquare$\\}
\newif\ifshowboxes \showboxestrue
\providecommand{\e}[1]{\ensuremath{\times 10^{#1}}}
\renewcommand\Re{\operatorname{Re}}
\renewcommand{\d}{\,\mathrm{d}}
\newcommand{\pp}[2]{\frac{\partial #1}{\partial #2}}
\newcommand{\norm}[1]{\ensuremath{ {\lVert #1 \rVert} }}
\newcommand{\abs}[1]{\ensuremath{ {\lvert #1 \rvert} }}
\def\C{\mathbbm{C}}
\def\R{\mathbbm{R}}
\def\1{\mathbbm{1}}
\def\ft{\mathcal{F}}
\def\nt{\mathcal{N}}
\def\st{\mathcal{S}}
\def\O{\mathcal{O}}
\renewcommand{\tilde}{\widetilde}
\renewcommand{\hat}{\widehat}
\newcommand{\tDelta}{\widetilde\Delta}
\newcommand{\bDelta}{\Delta_1}
\newcommand{\td}[1][]{\textcolor{red}{\fcolorbox{red}{pink}{\textbf{\textcolor{red}{\scalebox{0.7}[1.0]{\small TODO}}}}\ifthenelse{\equal{#1}{}}{}{~\emph{#1}}}\xspace}
\newcommand{\tri}{\Delta^0}
\newcommand{\origin}{O}
\newcommand{\mach}{u}
\begin{document}

\begin{center}
    \begin{minipage}[t]{6.0in}
      The accurate and efficient evaluation of Newtonian potentials
      over general 2-D domains is  important for the numerical
      solution of Poisson's equation and volume integral equations. 
      In this paper, we present a simple and efficient high-order algorithm
      for computing the Newtonian potential over a planar domain discretized
      by an unstructured mesh.  The algorithm is based on the use of Green's
      third identity for transforming the Newtonian potential into a
      collection of layer potentials over the boundaries of the mesh
      elements, which can be easily evaluated by the Helsing-Ojala method.
      One important component of our algorithm is the use of high-order (up
      to order 20) bivariate polynomial interpolation in the monomial basis,
      for which we provide extensive justification. The performance of our
      algorithm is illustrated through several numerical experiments.

 \vspace{ 0.15in}
 \noindent \textbf{Keywords}: Newtonian potential, Poisson's equation,
 Green's third identity, Vandermonde matrix, Monomials

   \vspace{ -100.0in}
 
 \thispagestyle{empty}

   \end{minipage}
 \end{center}
 
 \vspace{ 3.10in}
 \vspace{ 0.80in}
 
 \begin{center}
   \begin{minipage}[t]{4.4in}
     \begin{center}

\textbf{Rapid evaluation of Newtonian potentials on planar~domains}
\\
   \vspace{ 0.30in}
 
 Zewen Shen$\mbox{}^{\dagger\, \diamond\, \star}$ and
 Kirill Serkh$\mbox{}^{\ddagger\, \diamond}$  \\
  v4, Oct 3, 2023  
 
     \end{center}
   \vspace{ -50.0in}
   \end{minipage}
 \end{center}
 
 \vspace{ 1.05in}

 \vfill
 
 \noindent 
 $\mbox{}^{\diamond}$  This author's work was supported in part by the NSERC
 Discovery Grants RGPIN-2020-06022 and DGECR-2020-00356.
 \\

 \vspace{2mm}
 
 \noindent
 $\mbox{}^{\dagger}$ Dept.~of Computer Science, University of Toronto,
 Toronto, ON M5S 2E4\\
 \noindent
 $\mbox{}^{\ddagger}$ Dept.~of Math. and Computer Science, University of Toronto,
 Toronto, ON M5S 2E4 \\
 
 \vspace{2mm}
 \noindent 
 $\mbox{}^{\star}$  Corresponding author
 \\

 \vfill
 \eject
\tableofcontents

\section{Introduction}

The accurate and efficient discretization of the Newtonian potential integral
operator
  \begin{align}
\mathcal{N}_\Omega[f](x):=\frac{1}{2\pi}\iint_{\Omega} \log(\norm{x-y})f(y)\d A_y
\label{for:intro}
  \end{align}
for a complicated 2-D domain $\Omega$ is important for the
numerical solution of Poisson's equation and volume integral equations.
However, its numerical evaluation poses three main difficulties.
Firstly, the integrand is weakly-singular, and thus, special-purpose
quadrature rules are required. Secondly, a complicated
domain $\Omega$ typically requires at least part of the domain to be
discretized by an unstructured mesh, over which the direct evaluation of the
potential by quadrature becomes costly. Finally, the
algorithm for evaluation should have linear time complexity with small
constants.  

When solving Poisson's equation, the Newtonian potential is used as a
particular solution to the equation. This particular solution can be
obtained by evaluating the volume integral in (\ref{for:intro}) directly
(see, for example, \cite{volint,anderson,zhuthesis,fata}), or,
alternatively, can be obtained by computing the Newtonian potential over a
regular domain $\Omega^+ \supset \Omega$ for an extended density function
$f^+$ defined on $\Omega^+$, such that $f^+|_\Omega=f|_\Omega$, which allows
for efficient precomputations for accelerating the potential evaluation
\cite{box1,askham}. When following the latter approach, the order of
convergence depends on the smoothness of the extended density function
$f^+$, which means that $f^+$ must be sufficiently smooth over $\Omega^+$ in
order to reach high accuracy within a reasonable computational budget.  We
refer the readers to, for example, \cite{fry1,askham,jiang1,bruno}, for a
series of work along this line. 

When solving volume integral equations, the aforementioned function extension
method is no longer applicable, as the computation does not require a
particular solution to Poisson's equation, but rather, a discretization
of the operator~(\ref{for:intro}).  However, as is shown in
\cite{volint,anderson}, difficulties arise when the domain is discretized by
an unstructured mesh, and a quadrature-based method is used. Firstly, the
Newtonian potential generated over a mesh element at a target location close
to that element is costly to compute, as the integrand is nearly-singular,
and thus, expensive adaptive integration is generally required. Furthermore,
one cannot efficiently precompute these near interactions as is done in
\cite{box1}, since the relative position of the target location and the
nearby mesh elements is arbitrary when an unstructured mesh is used.
Secondly, efficient self-interaction computations (i.e., when the target
location is inside the mesh element generating the Newtonian potential)
generally require a large number of precomputed generalized Gaussian
quadrature rules \cite{bremer1,ggq}, which could be nontrivial to construct.

There are several previously proposed methods \cite{mayo1,dual,ethridge,tower}
which avoid these issues, by not directly evaluating the volume integral.
One such method is the dual reciprocity method (DRM) \cite{dual}, which first
constructs a global approximation of the anti-Laplacian of the density
function over the domain, and then reduces the evaluation of the Newtonian
potential over the domain to the evaluation of layer potentials over the
boundary of the domain by Green's third identity. As the 1-D layer potential
evaluation problem has been studied extensively, such a reduction is
favorable. Furthermore, the method does not require the domain to be meshed,
and thus, is particularly suitable for use in the boundary integral
equation method \cite{fds}. However, approximating the density function
globally over the domain using, for example, radial basis
functions with tractable anti-Laplacians, is challenging, and the method is
often inefficient when high accuracy is required. 

In this paper, we present a simple and efficient high-order algorithm that unifies the
far, near and self-interaction computations, and resolves all of the
aforementioned problems. As in the DRM, we use the anti-Laplacian to reduce
the volume integral to a collection of boundary integrals. However, unlike
the DRM, we approximate the anti-Laplacian locally over each mesh element,
and then reduce the Newtonian potential to layer potentials over the
boundaries of the individual mesh elements. We efficiently evaluate the
resulting layer potentials to machine precision using the Helsing-Ojala
method. As a result, we are able to rapidly evaluate the Newtonian potential
generated by each mesh element at any target location to machine accuracy,
with the speed of the evaluation independent of the target location.  In
particular, the speeds of close and self-evaluations for a single mesh
element are almost the same as the speed of evaluating a layer potential
over the element boundary by naive quadrature. Furthermore, the use of
Green's third identity reduces the number of quadrature nodes in the far
field interaction computation over a single mesh element from $\O(N^2)$ to
$\O(N)$.  Finally, we note that the precomputation required by our algorithm
makes up a small fraction of the total cost.

The key component of our algorithm is the computation of the anti-Laplacian
of the density function $f$ over each mesh element. We approximate $f$ by a
bivariate polynomial interpolant in the monomial basis, which allows for
easy computation of the anti-Laplacian using simple recurrence relations,
and provides a unified approach for handling both triangle and curved
triangle mesh elements.  Despite the exponential ill-conditioning of the
Vandermonde matrix, we recently show in \cite{mono} that the 
monomial basis generally performs as well as a well-conditioned polynomial
basis for interpolation, provided that the condition number of the
Vandermonde matrix is below the reciprocity of machine epsilon. In this
paper, we apply this idea to bivariate polynomial interpolation in the
monomial basis over a (possibly curved) triangle, and demonstrate that the
resulting order of approximation can reach up to 20, regardless of the
triangle's aspect ratio.

One may observe that our algorithm resembles the method proposed in Chapter
5 of \cite{ethridge}.  However, there exist two notable distinctions.
Firstly, the order of approximation is constrained to 4 in \cite{ethridge},
whereas our approach permits a substantially higher order of approximation,
reaching up to 20.  Secondly, we discretize the domain solely by (possibly
curved) triangles, while in \cite{ethridge}, the domain is discretized by
the Cartesian cut cell method, where the potentials generated over the
interior boxes are computed by the box code \cite{box1}, and the ones
generated over the cut cells are computed via Green's third identity.

\section{Mathematical and numerical preliminaries}

\subsection{Newtonian potential}
  \label{sec:newton}
\begin{definition}
The infinite-space Green's function for Poisson's equation is 
  \begin{align}
G(x,y)=\frac{1}{2\pi}\log\norm{x-y},
  \end{align}
where $x,y\in\R^2$.
\end{definition}
It is well-known that the function $G$ satisfies
  \begin{align}
\nabla^2_x G(x,y) = \delta(x-y), \label{for:Gdelta}
  \end{align}
where $\delta$ denotes the Dirac delta function. 

\begin{definition}
Given a domain $\Omega$ and an integrable function $f:\Omega\to\R$,
the Newtonian potential with density $f$ is defined to be
  \begin{align}
u(x)=\iint_{\Omega} G(x,y)f(y)\d A_y = \frac{1}{2\pi}\iint_{\Omega}
\log(\norm{x-y})f(y)\d A_y. \label{for:newpot}
  \end{align}
\end{definition}

It follows immediately from (\ref{for:Gdelta}) that the Newtonian potential
$u(x)$ satisfies $\nabla^2 u=f$ in~$\Omega$.

We now introduce Green's third identity, which reduces the Newtonian
potential over~$\Omega$ to layer potentials over $\partial\Omega$.  
\begin{theorem}
\label{thm:green3}
Let $\Omega$ be a 2-D planar domain and $f$ be an integrable function on
$\Omega$. Suppose that $\varphi:\Omega\to\R$ satisfies $\nabla^2 \varphi=f$. Then,
  \begin{align}
\hspace*{-5em}\iint_\Omega G(x,y)f(y)\d A_y =
\varphi(x)\mathbbm{1}_\Omega(x)+\oint_{\partial \Omega} \Bigl(
G(x,y)\pp{\varphi}{n_y}(y) - \pp{G(x,y)}{n_y} \varphi(y) \Bigr)\d \ell_y,
  \label{for:green3}
  \end{align}
for $x\in \R^2\setminus \partial\Omega$, where $\mathbbm{1}_\Omega$ denotes
the indicator function for the domain $\Omega$, and $n_y$ denotes the
outward pointing unit normal vector at the point $y$.  
\end{theorem}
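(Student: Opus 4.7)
The plan is to prove Theorem \ref{thm:green3} by applying Green's second identity with $u(y) = G(x,y)$ and $v(y) = \varphi(y)$, treating the two cases $x \notin \overline{\Omega}$ and $x \in \Omega$ separately. Recall that Green's second identity states that for sufficiently smooth $u,v$ on a domain $D$,
\begin{equation*}
\iint_D \bigl(u \nabla^2 v - v\nabla^2 u\bigr)\,dA_y = \oint_{\partial D} \Bigl( u \pp{v}{n_y} - v \pp{u}{n_y}\Bigr)\,d\ell_y.
\end{equation*}
The main obstacle is that $G(x,y) = \frac{1}{2\pi}\log\norm{x-y}$ has a (weak) logarithmic singularity at $y=x$, so this identity cannot be applied directly on $\Omega$ when $x\in\Omega$; one must excise a small ball around $x$ and track the boundary contributions carefully.

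First I would handle the exterior case $x \in \R^2 \setminus \overline{\Omega}$. Here $G(x,\cdot)$ is smooth on a neighborhood of $\overline{\Omega}$, and by (\ref{for:Gdelta}) together with the fact that $x\notin\Omega$, we have $\nabla_y^2 G(x,y)=0$ for $y\in\Omega$. Applying Green's second identity to $u=G(x,\cdot)$ and $v=\varphi$, and using $\nabla^2\varphi=f$, gives
\begin{equation*}
\iint_\Omega G(x,y)f(y)\,dA_y = \oint_{\partial\Omega}\Bigl(G(x,y)\pp{\varphi}{n_y}(y) - \pp{G(x,y)}{n_y}\varphi(y)\Bigr)\,d\ell_y,
\end{equation*}
which matches (\ref{for:green3}) since $\mathbbm{1}_\Omega(x)=0$.

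Next I would treat the interior case $x\in\Omega$. For small $\epsilon>0$, let $\Omega_\epsilon = \Omega \setminus \overline{B_\epsilon(x)}$, on which $G(x,\cdot)$ is smooth and harmonic. Applying Green's second identity on $\Omega_\epsilon$ (whose boundary has two components, with the outward normal on $\partial B_\epsilon(x)$ pointing towards $x$) yields
\begin{equation*}
\iint_{\Omega_\epsilon} G(x,y)f(y)\,dA_y = \oint_{\partial\Omega}\Bigl(G\pp{\varphi}{n_y} - \pp{G}{n_y}\varphi\Bigr)d\ell_y - \oint_{\partial B_\epsilon(x)}\Bigl(G\pp{\varphi}{n_y} - \pp{G}{n_y}\varphi\Bigr)d\ell_y,
\end{equation*}
where the normal on $\partial B_\epsilon(x)$ is now taken to point outward from $B_\epsilon(x)$. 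Now I take $\epsilon\to 0^+$. The left-hand side converges to $\iint_\Omega G(x,y)f(y)\,dA_y$ by integrability of the logarithmic singularity. On $\partial B_\epsilon(x)$ the first boundary term is $O(\epsilon\log\epsilon)$ and vanishes, while on $\partial B_\epsilon(x)$ one computes $\pp{G}{n_y} = \frac{1}{2\pi\epsilon}$, so the second term equals $\frac{1}{2\pi\epsilon}\oint_{\partial B_\epsilon(x)}\varphi(y)\,d\ell_y$, which tends to $\varphi(x)$ by continuity of $\varphi$. Combining these limits produces exactly (\ref{for:green3}) with $\mathbbm{1}_\Omega(x)=1$, completing the proof. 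The only delicate point is the standard estimate of the two surface integrals over $\partial B_\epsilon(x)$, which I would justify by bounding $\varphi$ and $\nabla\varphi$ on a small neighborhood of $x$ and using $|\partial B_\epsilon(x)| = 2\pi\epsilon$.
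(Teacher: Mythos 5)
Your proof is correct and is the standard textbook derivation of Green's third identity. Note, however, that the paper itself does not supply a proof of Theorem~\ref{thm:green3}: it simply states the identity and refers the reader to \cite{ethridge}, so there is no paper argument to compare against. Your route is the canonical one: apply Green's second identity to $u=G(x,\cdot)$ and $v=\varphi$, treating $x\notin\overline{\Omega}$ directly (where $G(x,\cdot)$ is harmonic on a neighborhood of $\overline{\Omega}$), and for $x\in\Omega$ excising a disk $B_\epsilon(x)$, exploiting harmonicity of $G(x,\cdot)$ on the punctured region, and sending $\epsilon\to 0^+$. The two small-disk boundary terms are handled correctly: the single-layer-type term is $O(\epsilon\log\epsilon)\to 0$, and the double-layer-type term uses $\partial G/\partial n_y = 1/(2\pi\epsilon)$ on $\partial B_\epsilon(x)$, so it converges to the average of $\varphi$ over $\partial B_\epsilon(x)$, hence to $\varphi(x)$ by continuity. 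The sign bookkeeping when passing from the outward normal of $\Omega_\epsilon$ to the outward normal of $B_\epsilon(x)$ is also handled correctly, yielding the $+\varphi(x)\mathbbm{1}_\Omega(x)$ term. The only caveat, which the paper also does not address, is that the stated hypotheses ($f$ merely integrable, $\nabla^2\varphi=f$) are weaker than what the limiting argument formally requires (e.g., continuity of $\varphi$ and local boundedness of $\nabla\varphi$ near $x$, and enough boundary regularity for the surface integrals over $\partial\Omega$ to make sense); in the paper's actual use of the theorem $\varphi$ is a polynomial, so this is not an issue.
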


\subsection{The Helsing-Ojala method for the close evaluation of 1-D
layer potentials}
  \label{sec:helsing}

In this section, we review the Helsing-Ojala method \cite{helsing} for
accurate and efficient evaluation of the 1-D single- and double-layer
potentials 
  \begin{align}
\int_{\Gamma} G(x,y)\pp{\varphi}{n_y}(y) \d \ell_y \text{\quad and \quad}
\int_{\Gamma} \pp{G(x,y)}{n_y} \varphi(y) \d \ell_y,
\label{for:sdpot}
  \end{align}
where $x\in\R^2$ is in close proximity to the curve $\Gamma\subset\R^2$.
Without loss of generality, we assume that the left endpoint of $\Gamma$
is $(-1,0)$, and the right endpoint of $\Gamma$ is $(1,0)$.

Firstly, observe that
  \begin{align}
\int_{\Gamma} G(x,y)\pp{\varphi}{n_y}(y) \d
\ell_y=\frac{1}{2\pi}\Re\int_{\Gamma}\log(z-x)
\Bigl(\pp{\varphi}{n_y}(z)\cdot
\frac{\d\ell_y}{\d z}\Bigr) \d z, \label{for:spot2}
  \end{align}
and 
  \begin{align}
\int_{\Gamma} \pp{G(x,y)}{n_y} \varphi(y) \d
\ell_y=\Re\frac{1}{2\pi i}\int_{\Gamma} \frac{\varphi(z)}{z-x}\d z,\label{for:dpot2}
  \end{align}
where, in a slight abuse of notation, we equate $\R^2$ with $\C$.
The integrals $\int_{\Gamma} \frac{z^{k}}{z-x}\d z$ and
$\int_{\Gamma}\log(z-x) z^k \d z$ satisfy the following recurrence
relations:
  \begin{subequations}
  \begin{align}
\hspace*{-4em}\int_{\Gamma} \frac{1}{z-x}\d z
=&\,\log(1-x)-\log(-1-x)+2\pi i \mathcal{N}_{x},\\
\hspace*{-4em}
\int_{\Gamma} \frac{z^{k+1}}{z-x}\d z
=&\,x\int_{\Gamma} \frac{z^k}{z-x}\d z
+\frac{1+(-1)^k}{k+1},\\
\hspace*{-4em}
\int_{\Gamma}\log(z-x) z^k \d
z=&\,\frac{1}{k+1}\Bigl(\log(1-x)+(-1)^k\log(-1-x)-\int_{\Gamma}
\frac{z^{k+1}}{z-x}\d z\Bigr),
  \end{align}
  \end{subequations}
for all $k\geq 0$, where $\mathcal{N}_x = 0$ when $x$ is outside the
region enclosed by the oriented closed curve formed by $\Gamma$ (traversed
forwards) and $[-1,1]$ (traversed backwards), and $\mathcal{N}_x = +1\,
(-1)$ when $x$ is inside the region enclosed counterclockwise (clockwise).
We note that these recurrence relations are stable when $x$ is close
to $\Gamma$.  Therefore, if the complex density functions
$\pp{\varphi}{n_y}(z)\cdot \frac{\d\ell_y}{\d z}$ and $\varphi(z)$ in
\eqref{for:spot2} and \eqref{for:dpot2} are approximated uniformly to high
accuracy by complex polynomials expressed in the monomial basis, then the
single- and double-layer potentials (\ref{for:sdpot}) can be readily
calculated via (\ref{for:spot2}) and (\ref{for:dpot2}), respectively, with
the aid of the aforementioned recurrence relations.

To approximate the density functions $\pp{\varphi}{n_y}(z)\cdot
\frac{\d\ell_y}{\d z}$ and $\varphi(z)$ by complex polynomials in the
monomial basis, one collocates at a set of nodes over $\Gamma$ with a small
Lebesgue constant, and then solves the resulting Vandermonde system with a
backward stable solver. Despite the ill-conditioning of the Vandermonde
matrix, based on our analysis in \cite{mono}, the monomial basis is as good
as a well-conditioned polynomial basis for interpolation, provided that the
condition number of the Vandermonde matrix is smaller than $\frac{1}{u}$,
where $u$ denotes machine epsilon, and that $u\cdot \norm{a}_2$ is smaller
than the polynomial interpolation error, where $a$ denotes the monomial
coefficient vector of the interpolating polynomial.  As is shown in
\cite{mono}, the first condition is met when the order of
approximation is less than $\approx 40$, even in the case where $\Gamma$ 
has a high curvature. In addition, the second condition is satisfied
automatically in most practical situations. Therefore, the use of a monomial
basis in floating point arithmetics is justified under these conditions.
However, it is pointed out in \cite{barnett} that the complex
density functions $\pp{\varphi}{n_y}(z)\cdot \frac{\d\ell_y}{\d z}$ and
$\varphi(z)$ have a singularity close to the domain $\Gamma$ when the
curvature of $\Gamma$ is not small, which leads to a slowly decaying
polynomial interpolation error. This issue can be remedied by adaptively
subdividing $\Gamma$ until the curvature of each subpanel is small.

\section{Bivariate polynomial interpolation in the monomial basis}
  \label{sec:mono}
In this section, we discuss the numerical stability of bivariate polynomial
interpolation in the monomial basis over a (possibly curved) triangle.

Let $\Delta\subset \R^2$ be a triangle, and let $F:\Delta\to\R$ be an
arbitrary function.  We define $\tilde N$ to be the dimensionality of the
space of 2-D polynomials of degree at most $N$, which is equal to
$\frac{(N+1)(N+2)}{2}$. The $N$th degree interpolating polynomial, which we
denote by $P_N$, of the function $F$ for a given set of $\tilde N$
collocation points $Z:=\{(x_j,y_j)\}_{j=1,\dots,\tilde N}\subset \Delta$ can
be expressed as 
  \begin{align}
P_N(x,y):=\sum_{j=0}^N \sum_{k=0}^{j} a_{j-k,k}
\Bigl(\frac{x-c}{s}\Bigr)^{j-k}\Bigl(\frac{y-d}{t}\Bigr)^k,
  \end{align}
where $(c,d)\in\R^2$ is the monomial expansion center, $s,t\in\R$ are the
scaling factors of the basis, and the monomial coefficient vector
$a^{(N)}:=(a_{00},a_{10},a_{01},\dots,a_{0N})^T\in \R^{\tilde N}$ is the
solution to the Vandermonde system $V^{(N)}a^{(N)}=f^{(N)}$, where
  \begin{align}
  \hspace*{-5.2em}
V^{(N)}:=  
\begin{pmatrix}
1 & \frac{x_1-c}{s} & \frac{y_1-d}{t} & (\frac{x_1-c}{s})^2 &
(\frac{x_1-c}{s})(\frac{y_1-d}{t})  &\cdots & (\frac{y_1-d}{t})^N \\ 
1 & \frac{x_2-c}{s} & \frac{y_2-d}{t}  &(\frac{x_2-c}{s})^2 &
(\frac{x_2-c}{s})(\frac{y_2-d}{t})  & \cdots & (\frac{y_2-d}{t})^N \\
\vdots & \vdots & \vdots& \vdots &\vdots & \ddots &\vdots \\
1 & \frac{x_{\tilde N}-c}{s} & \frac{y_{\tilde N}-d}{t} &
(\frac{x_{\tilde N}-c}{s})^2 & 
(\frac{x_{\tilde N}-c}{s})(\frac{y_{\tilde N}-d}{t}) & \cdots &
(\frac{y_{\tilde N}-d}{t})^N
\end{pmatrix}\in \R^{\tilde N\times\tilde N}
  \label{for:vand2d}
  \end{align}
is a 2-D Vandermonde matrix and
$f^{(N)}:=\bigl(F(x_1,y_1),F(x_2,y_2),\dots,F(x_{\tilde N},y_{\tilde
N})\bigr)^T\in \R^{\tilde N}$.  A notable feature of polynomial
interpolation in dimensions higher than one is the possibility of
non-uniqueness in the solution to this Vandermonde system (equivalently,
non-uniqueness of $P_N$), even when the collocation points are all distinct.
A nonlinear optimization algorithm for computing well-conditioned
collocation points for polynomial interpolation of order up to $20$ over a
bounded convex domain has been proposed in \cite{vior}. The resulting
points, known as Vioreanu-Rokhlin nodes, are well-conditioned in the sense
that the associated Lebesgue constant is relatively small in magnitude
(which also implies that the corresponding Vandermonde matrix
\eqref{for:vand2d} is invertible).  In Figure \ref{fig:tri}, we plot an
example set of Vioreanu-Rokhlin nodes over a triangle, along with the
corresponding Lebesgue constants for various orders of approximation.

\begin{figure}[h]
    \centering
  \begin{subfigure}{0.49\textwidth}
      \centering
      \includegraphics[width=\textwidth]{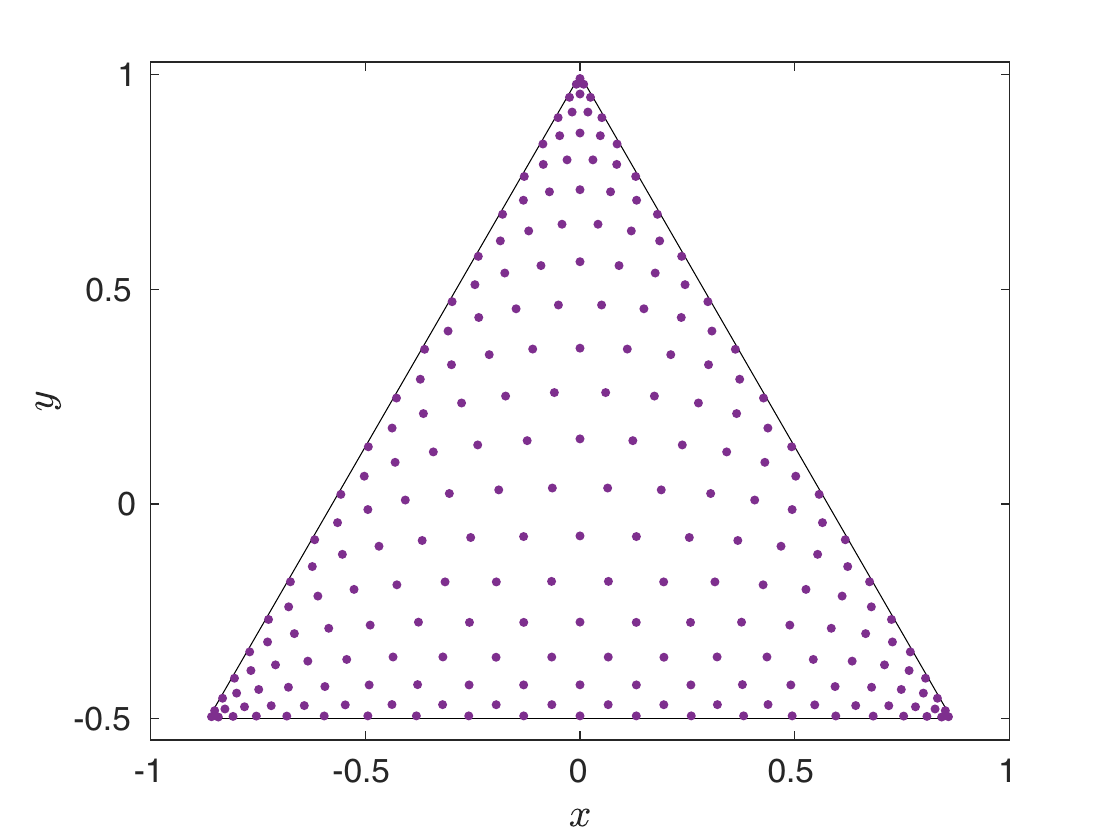}
      \caption{}
      \label{fig:tri:vior}
    \end{subfigure}
    \begin{subfigure}{0.49\textwidth}
      \centering
      \includegraphics[width=\textwidth]{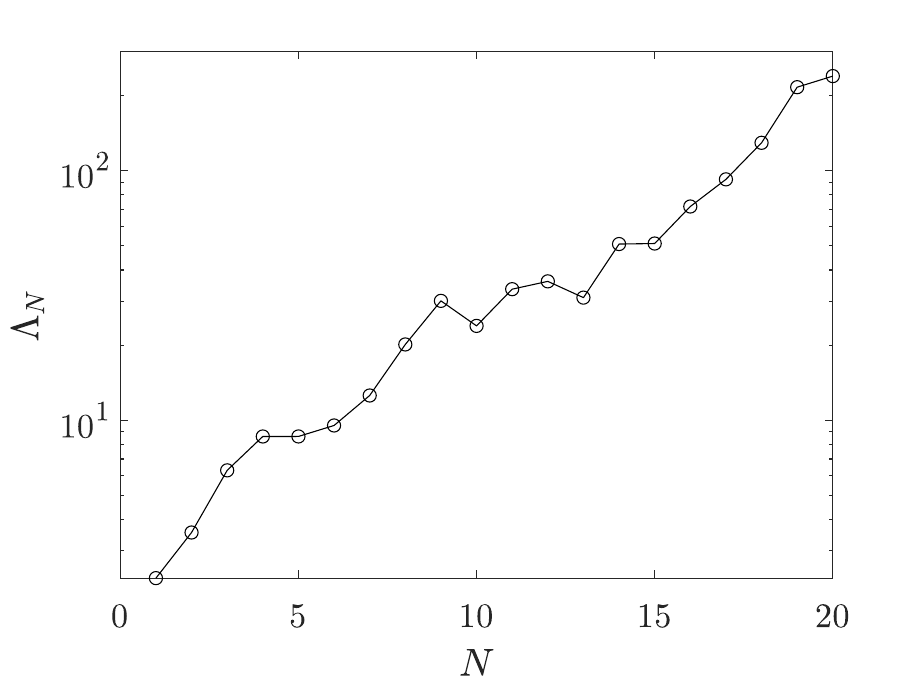}
      \caption{}
      \label{fig:tri:leb}
    \end{subfigure}
  \caption{{\bf The 20th order Vioreanu-Rokhlin nodes over a triangle, and
  the associated Lebesgue constants for various orders of approximation.}.
  The $x$-axis label $N$ denotes the order of approximation. 
  One may observe in Figure \ref{fig:tri:leb} that the Lebesgue constant for
  the Vioreanu-Rokhlin nodes does not exhibit monotonic growth, which is due
  to the heuristic nature of the algorithm used to construct these nodes.
  }
      \label{fig:tri}
\end{figure}

Now let $\tDelta\subset \R^2$ be a star-shaped curved triangle with only one
curved side, $\gamma:[0,L]\to\R^2$ be the parameterization of the
curved side of $\tDelta$, and $\origin\in\R^2$ be the vertex opposite to the
curved side.  The blending function method \cite{blending1} provides a
smooth mapping from the standard simplex $\tri=\{(x,y)\in \R^2 : 0\leq x\leq
1,0\leq y\leq 1-x\}$ to the curved triangle $\tDelta$, defined
by the formula
  \begin{align}
\hspace*{-0em}\rho(\xi,\eta) :=&\ (1-\xi-\eta)\cdot\gamma(L) +
\xi\cdot\gamma(0)+\eta\cdot\origin\notag \\+
&\frac{1-\xi-\eta}{1-\xi}\Bigl(\gamma\bigl(L(1-\xi)\bigr) -
(1-\xi)\cdot\gamma(L)-\xi\cdot\gamma(0)\Bigr).
  \label{for:blend}
  \end{align}
To obtain a set of collocation points over $\tDelta$, we map the
Vioreanu-Rokhlin nodes over $\tri$ to $\tDelta$ via (\ref{for:blend}). 
We observe that the Lebesgue constant of resulting collocation points is
also relatively small in magnitude when $\gamma$ is not too curved.

Similar to the 1-D case, the 2-D Vandermonde matrix \eqref{for:vand2d} is
also exponentially ill-conditioned.  The following theorem provides a priori
bounds for the monomial approximation error, which shows that the accuracy
of approximation is essentially unrelated to the ill-conditioning of
the matrix.  Its proof is almost identical to the proof of Theorem 2.2
in \cite{mono}.
\begin{theorem}
  \label{thm:mono_err}
Let $\Omega\subset \R^2$ be a bounded domain, and let $F:\Omega\to\C$ be an
arbitrary function. Suppose that $P_N$ is the $N$th degree bivariate
interpolating polynomial of $F$ for a given set of $\tilde N$ distinct
collocation points $Z:=\{(x_j,y_j)\}_{j=1,2,\dots,\tilde N}\subset \Omega$. 
Let $V^{(N)}$, $a^{(N)}$ and $f^{(N)}$ be the same as introduced above.  
Suppose that there exists some constant $\gamma_N\geq 0$ such that
the computed monomial coefficient vector $\hat a^{(N)}:= (\hat a_{00},\hat
a_{10},\hat a_{01},\dots,\hat a_{0N})^T\in \R^{\tilde N}$
satisfies 
  \begin{align}
\bigl(V^{(N)}+\delta V^{(N)}\bigr)\hat a^{(N)} = f^{(N)},\label{for:mm0}
  \end{align}
for some $\delta V^{(N)}\in \R^{\tilde N\times \tilde N}$ with
  \begin{align}
\norm{\delta V^{(N)}}_2\leq \mach\cdot
\gamma_N,\label{for:mm1}
  \end{align}
where $u$ denotes machine epsilon.  Let $\hat P_N(x,y):=\sum_{j=0}^N
\sum_{k=0}^{j} \allowbreak\hat a_{j-k,k}
\bigl(\frac{x-c}{s}\bigr)^{j-k}\bigl(\frac{y-d}{t}\bigr)^k$ be the computed
monomial expansion. If the 2-norm of $(V^{(N)})^{-1}$ satisfies
  \begin{align}
\norm{(V^{(N)})^{-1}}_2\leq \frac{1}{2u\cdot \gamma_N},\label{for:mm8}
  \end{align}
then the 2-norm of the numerical solution $\hat a^{(N)}$ is bounded by
  \begin{align}
\frac{2}{3} \norm{a^{(N)}}_2\leq \norm{\hat a^{(N)}}_2\leq
2\norm{a^{(N)}}_2,\label{for:mm999}
  \end{align}
and the monomial approximation error can be quantified a priori by 
  \begin{align}
\hspace*{-0.0em}
\norm{F-\hat P_N}_{L^\infty(\Omega)}
\leq&\,  \norm{F-P_N}_{L^\infty(\Omega)} + 2\mach\cdot \gamma_N\Lambda_N
\norm{a^{(N)}}_2,\label{for:priori1}
  \end{align}
where $\Lambda_N$ denotes the Lebesgue constant for $Z$. 
\end{theorem}

When solving the Vandermonde system using a backward stable linear system
solver, the set of assumptions \eqref{for:mm0} and \eqref{for:mm1} is
satisfied with constant $\gamma_N=\O(\norm{V^{(N)}}_2)$.  Furthermore,
based on the same analysis as in \cite{mono}, one can show that $u\cdot
\norm{a^{(N)}}_2\lesssim \norm{F-P_N}_{L^\infty(\Omega)}$ holds in most
practical situations when $\norm{(V^{(N)})^{-1}}_2$ satisfies the condition
\eqref{for:mm8}, from which it follows that the monomial basis is as good as
an orthogonal polynomial basis for interpolation in such cases.  Therefore,
it is advisable to carefully select the monomial expansion center $(c,d)$
and the scaling factors $s$, $t$ to minimize the growth of both
$\norm{V^{(N)}}_2$ and $\norm{(V^{(N)})^{-1}}_2$.  Below, we provide an
algorithm for choosing these constants.

Given an arbitrary bounded domain $\Omega$ in $\mathbb{R}^2$, we define
$B$ to be the minimum bounding box of $\Omega$ (see Figure
\ref{fig:cond:1}). Then, we establish a local coordinate system centered at
the midpoint of $B$, with the $x$- and $y$-axes aligned parallel to the
sides of $B$. In this coordinate system, we set $c$ and $d$ to be zero, $s$
to be half the length of the longer side of $B$, and $t$ to be half the
length of the shorter side of $B_1$.  One can
show that the entries of the resulting Vandermonde matrix $V^{(N)}$ are no
larger than one in magnitude, which implies that the constant $\gamma_N$ is
small. In addition, one can observe from Figure \ref{fig:cond:2}
that the condition \eqref{for:mm8} is satisfied for $N\lesssim 20$,
regardless of the triangle's aspect ratio. 

In Section \ref{sec:bimo}, we provide numerical experiments to demonstrate
the feasibility of bivariate polynomial interpolation in the monomial basis.

\begin{figure}
    \centering
    \begin{subfigure}{0.49\textwidth}
      \centering
      \includegraphics[width=\textwidth]{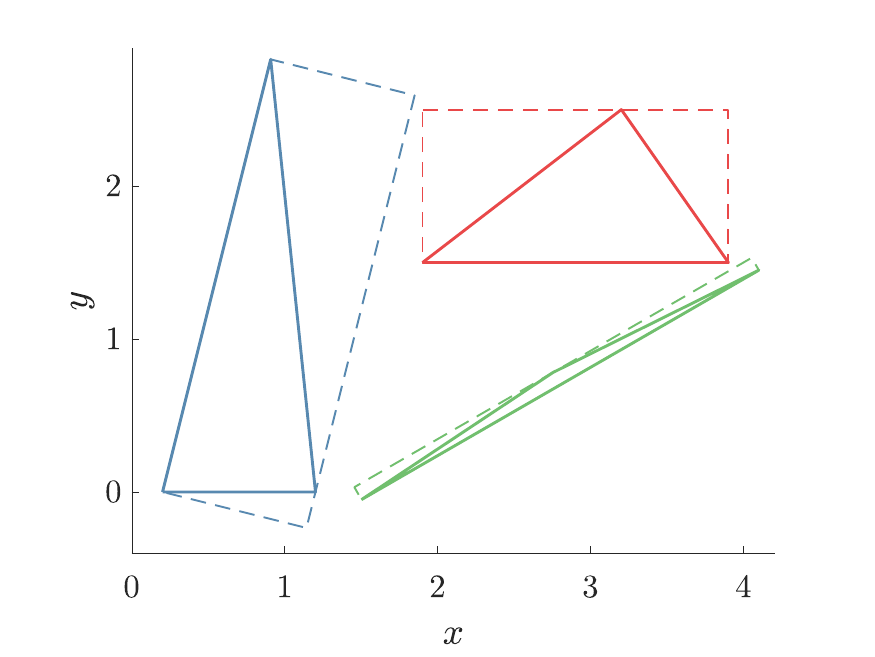}
      \caption{\label{fig:cond:1}}
    \end{subfigure}
    \begin{subfigure}{0.49\textwidth}
      \centering
      \includegraphics[width=\textwidth]{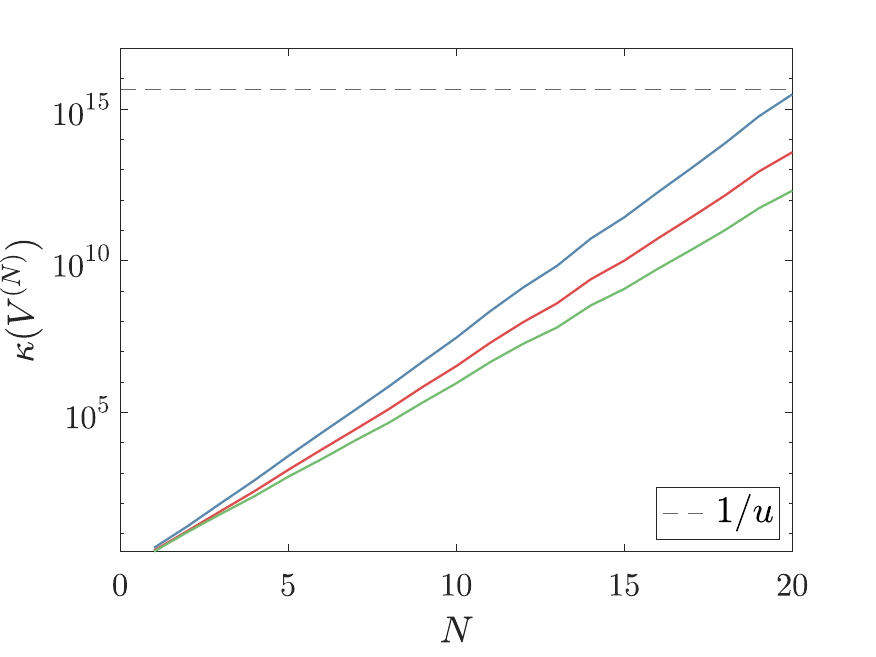}
      \caption{\label{fig:cond:2}}
    \end{subfigure}
  \caption{{\bf The growth of $\kappa(V^{(N)})$ for triangles
  with different aspect ratios}. The colors of the triangles in Figure
  \ref{fig:cond:1} correspond to the line colors depicted in
  Figure~\ref{fig:cond:2}. The boxes in Figure \ref{fig:cond:1} define
  the local coordinate for each triangle.}
  \label{fig:cond}
\end{figure}

\section{Numerical algorithm}
In this section, we first present an algorithm for computing the Newtonian
potential when the domain $\Omega$ is a (possibly curved)
triangle.  Then, we describe how to apply this algorithm to compute the
Newtonian potential over a general domain. In the end, we show
that our algorithm has linear time complexity.

\subsection{Construction of the anti-Laplacian mapping}
\label{sec:antilap}
In this section, we present an algorithm for computing the anti-Laplacian of
a bivariate monomial.  With a slight abuse of notation, we denote the
anti-Laplacian operator by $\nabla^{-2}$, which we define by the recurrence
relations
  \begin{subequations}
  \begin{align}
\nabla^{-2} \Bigl[\Bigl(\frac{x-c}{s}\Bigr)^m\Bigr] =&\,
\frac{s^2}{(m+1)(m+2)}\Bigl(\frac{x-c}{s}\Bigr)^{m+2},\\
\nabla^{-2} \Bigl[\Bigl(\frac{x-c}{s}\Bigr)^m
\Bigl(\frac{y-d}{t}\Bigr)\Bigr]
=&\,\frac{s^2}{(m+1)(m+2)}\Bigl(\frac{x-c}{s}\Bigr)^{m+2}\Bigl(\frac{y-d}{t}\Bigr),
  \end{align}
  \end{subequations}
for all $m\geq 0$, and
  \begin{subequations}
  \begin{align}
\nabla^{-2} \Bigl[\Bigl(\frac{y-d}{t}\Bigr)^n\Bigr] =&\,
\frac{t^2}{(n+1)(n+2)}\Bigl(\frac{y-d}{t}\Bigr)^{n+2},\\
\nabla^{-2}
\Bigl[\Bigl(\frac{x-c}{s}\Bigr)\Bigl(\frac{y-d}{t}\Bigr)^n\Bigr]
=&\,\frac{t^2}{(n+1)(n+2)}\Bigl(\frac{x-c}{s}\Bigr)
\Bigl(\frac{y-d}{t}\Bigr)^{n+2},
  \end{align}
  \end{subequations}
for all $n\geq 0$, and
  \begin{subequations}
  \begin{align}
  \hspace*{-6.5em}
\nabla^{-2}
\Bigl[\Bigl(\frac{x-c}{s}\Bigr)^m\Bigl(\frac{y-d}{t}\Bigr)^n\Bigr]= &\,
\frac{s^2}{(m+2)(m+1)}\Bigl(\frac{x-c}{s}\Bigr)^{m+2}\Bigl(\frac{y-d}{t}\Bigr)^n\notag\\
&\,-\frac{s^2n(n-1)}{t^2(m+2)(m+1)}\nabla^{-2}
\Bigl[\Bigl(\frac{x-c}{s}\Bigr)^{m+2}\Bigl(\frac{y-d}{t}\Bigr)^{n-2}\Bigr]\label{for:antix}\\ 
=&\, 
\frac{t^2}{(n+2)(n+1)}\Bigl(\frac{x-c}{s}\Bigr)^{m}\Bigl(\frac{y-d}{t}\Bigr)^{n+2}\notag\\
&\,-\frac{t^2m(m-1)}{s^2(n+2)(n+1)}\nabla^{-2}
\Bigl[\Bigl(\frac{x-c}{s}\Bigr)^{m-2}\Bigl(\frac{y-d}{t}\Bigr)^{n+2}\Bigr],\label{for:antiy} 
  \end{align}
  \end{subequations}
for all $m\geq 2,n\geq2$. It is easy to verify that
$\nabla^2\circ\nabla^{-2}=I$.   Based on these recurrence relations, one can
construct a mapping from the monomial coefficients of a bivariate polynomial
of degree $N$ to the monomial coefficients of the anti-Laplacian of this
polynomial. This mapping only has $\O(N^2)$ non-zero entries, and should be
stored in a sparse format. 

\begin{remark}
The identity (\ref{for:antix}) produces a shorter sequence of recurrence
relations when $m\geq n$, and vice versa for (\ref{for:antiy}).
\end{remark}

\subsection{Close and self-evaluation of Newtonian potential over a
mesh element}
\label{sec:neareval}

Given a possibly curved triangle $\bDelta$ and a function $f:\bDelta \to\R$,
we first compute its bivariate interpolating polynomial in the monomial
basis, as described in Section \ref{sec:mono}.  By Green's third identity
(see Theorem \ref{thm:green3}), the Newtonian potential with density
function $f$ over $\bDelta$ at a given target $x\in\R^2$ can be expressed as
  \begin{align}
\hspace*{-6.0em}\iint_{\bDelta} G(x,y)f(y)\d A_y \approx &\,\iint_{\bDelta}
G(x,y)P_N(y)\d A_y\notag\\
=&\,\varphi(x)\mathbbm{1}_{\bDelta} (x)+\oint_{\partial \bDelta} \Bigl(
G(x,y)\pp{\varphi}{n_y}(y) - \pp{G(x,y)}{n_y} \varphi(y) \Bigr)\d \ell_y,
\label{for:g1d}
  \end{align}
where $P_N$ is the $N$th degree bivariate interpolating polynomial
of $f$, and $\varphi:=\nabla^{-2}[P_N]$ is a bivariate polynomial of
degree $(N+2)$ computed using the algorithm
outlined in the previous section. Thus, it remains to compute the layer
potentials $\int_{L_i} G(x,y)\pp{\varphi}{n_y}(y) \d \ell_y$ and $\int_{L_i}
\pp{G(x,y)}{n_y} \varphi(y) \d \ell_y$ for $i=1$, $2$, $3$, where $L_i$
denotes the $i$th edge of $\bDelta$. When $x$ is well-separated from $L_i$,
these integrands are smooth, and \eqref{for:g1d} is computed using
a standard quadrature rule. When $x$ is close to $L_i$, these integrands
become nearly-singular, and the Helsing-Ojala method is used
for the calculation (see Section \ref{sec:helsing}). We note that the
restriction of the anti-Laplacian $\varphi$ to a line segment is a
univariate polynomial of degree $N+2$.

\begin{remark}
It is well-known that Horner's method evaluates a polynomial in the monomial
basis with the fewest number of multiplications.  However, Estrin's scheme
outperforms Horner's method in terms of speed on a modern computer, as it
effectively utilizes CPU pipelines.
\end{remark}

\subsection{Generalization to an arbitrary domain}
Given a general planar region $\Omega$, we first discretize $\Omega$ into
triangles and curved triangles using a standard off-the-shelf meshing
algorithm.  Then, we construct the anti-Laplacian of the density function in
the form of a 2-D monomial expansion for each mesh element.  We also compute
the 1-D monomial expansion coefficients for the restriction of the
anti-Laplacian and its normal derivatives to the edges of each element. At
this stage, all of the required precomputations are completed.

Then, we use the point-based fast multipole method (FMM) \cite{fmm} to
compute the far field interactions. Typically, one uses 2-D quadrature rules
over triangles to compute the far field interactions generated over mesh
elements (see, for example, \cite{volint,anderson}) and, thus, the number of
quadrature nodes over each element for computing far field interactions is
of order $\O(N^2)$, where $N$ is the degree of the bivariate interpolating
polynomial for the density function. We note, however, that in the far
field, the layer potentials in Green's third identity (\ref{for:green3}) can
be computed efficiently and accurately by Gauss-Legendre rules. It follows
that, in our algorithm, the number of quadrature nodes over each element is
of order $\O(N)$. Finally, we compute the near and
self-interactions using the algorithm presented in the previous section, and
use the ``subtract-and-add'' method to remove the spurious contribution from
the FMM (see \cite{leslie} for details).

\begin{remark}
\label{rem:edge}
Given two adjacent mesh elements, their far field quadrature nodes over their
common edge coincide, and thus, one could merge the nodes in the FMM
computation to reduce the number of sources by a factor of two. Similarly,
one could merge the expansion coefficients of the two 1-D monomial
expansions over the common edge of two elements to reduce the near
interaction computational cost by a factor of two.
\end{remark}

\subsection{Time complexity analysis}
In this section, we present the time complexity of our algorithm. 
Suppose that we construct a $p$th degree bivariate interpolating polynomial
of the density function over each element.  Consequently, each polynomial is
represented by $(p+1)(p+2)/2=\O(p^2)$ terms in the monomial basis.  We
estimate the various costs as follows.

\paragraph{Precomputation for each mesh element:} 
\begin{enumerate}
\item The computation of the 2-D monomial expansion coefficients of the
density function takes $\O(p^6)$ operations, since the cost is dominated by
the factorization of a 2-D Vandermonde matrix of size $\O(p^2)\times
\O(p^2)$.
\item The computation of the anti-Laplacian takes $\O(p^3)$ operations.
\item The evaluation of the anti-Laplacian and its normal derivative at the
$\O(p)$ collocation points on the edges of the mesh elements takes $\O(p^3)$
operations.
\item 
The computation of the 1-D monomial expansions of the restriction of the
anti-Laplacian and its normal derivatives takes $\O(p^2)$ operations on a
straight edge (as one can store and reuse the pivoted LU factorization of
the 1-D Vandermonde matrix with Gauss-Legendre collocation nodes over
$[-1,1]$), and takes $\O(p^3)$ on a curved edge.  We note that the total
number of curved edges is generally far fewer than the total number of
straight edges.
\end{enumerate}

\paragraph{Close and self-evaluation of the Newtonian potential over a mesh
element at a single target location:}
\begin{enumerate}
\item It takes $\O(p)$ operations to evaluate the layer potentials on
the right hand side of Green's third identity (\ref{for:green3}), either by
the Gauss-Legendre rule or the Helsing-Ojala method.
\item It takes $\O(p^2)$ operations to evaluate the anti-Laplacian on the
right hand side of Green's third identity (\ref{for:green3}). This is only
required by the self-evaluation.
\end{enumerate}

Based on these estimates, we present the total number of
operations required to evaluate the Newtonian potential over all of the
discretization nodes over the domain $\Omega$. Suppose that $\Omega$ is
discretized into $m$ mesh elements.  Then, the number of discretization
nodes $N_{\text{tot}}$ is of order $\O(p^2m)$.  First, the precomputation
takes $\O(p^6m)=\O(p^4N_{\text{tot}})$
operations.  Second, the far field interaction costs (i.e., the FMM cost) are
of order $\O(pm+N_{\text{tot}})$. Third, the near interaction computation
takes $\O(pN_{\text{tot}})$ operations, as each discretization node is
inside the near fields of a constant number of mesh elements. Finally, the
self-interaction computation takes $\O(p^2N_{\text{tot}})$ operations. Since
$p$ is generally a small constant, our algorithm has linear
time complexity.  Furthermore, we note that the constant associated with the
precomputation is small (see Table \ref{tab:poi1}), and the near and
self-interaction computations are nearly instantaneous after the
precomputations have been performed.

\section{Numerical experiments}
In this section, we illustrate the performance of the algorithm with several
numerical examples.  We implemented our algorithm in Fortran 77 and Fortran 90, and
compiled it using the Intel Fortran Compiler, version 2021.6.0, with the
\texttt{-Ofast} flag. We conducted all experiments on a ThinkPad laptop,
with 16 GB of RAM and an Intel Core i7-10510U CPU. 

We use the Vioreanu-Rokhlin rules \cite{vior}, which are publicly
available in \cite{triasymq}. We use the FMM library published in
\cite{fmm2d} in our implementation. We use the subroutines \texttt{dgetrf} and
\texttt{dgetrs} (i.e., LU factorization with partial pivoting) from LAPACK
as our linear system solver for the Vandermonde system.  We make no use of
parallelization. While we comment in Remark \ref{rem:edge} that it is more
efficient to loop through edges instead of triangles, these features are not
implemented in our code, for the sake of simplicity.

We list the notation that appears in this section below.
\begin{itemize}
\item $S_{\text{exps}}$: The number of targets at which the Newtonian
potential generated over a mesh element can be evaluated per second using
our algorithm, after the precomputation.
\item $S_{\text{adap}}$: The number of targets at which the Newtonian
potential generated over a mesh element can be evaluated, per second, using
adaptive integration.
\item $E_{\text{exps}}$: The absolute error of the potential evaluation computed
using our algorithm.
\item $E_{\text{adap}}$: The absolute error of the potential evaluation computed
using adaptive integration.
\item $N_{\text{elem}}$: The number of mesh elements.
\item $h_0^{\max}$:  The maximum diameter of all mesh elements. 
\item $h_0^{\min}$:  The minimum diameter of all mesh elements. 
\item $A^{\max}$:  The maximum aspect ratio of all mesh
elements.  The aspect ratio of a triangle equals the ratio of its
circumradius to twice its inradius.
\item $N_{\text{ord}}$: The order of the bivariate polynomial approximation
to the density function over each mesh element.
\item $N_{\text{tot}}^{\text{tgt}}$: The total number of targets at which
the Newtonian potential is evaluated.
\item $N_{\text{tot}}^{\text{src}}$: The total number of sources (i.e.,
far field quadrature nodes).
\item $T_{\text{geom}}$: The time spent on the geometric algorithms: 
quadtree constructions, nearby elements queries, etc. The mesh creation time
is not counted.
\item $T_{\text{init}}$: The time spent on the precomputations required by our
algorithm.
\item $T_{\ft}$: The time spent on the far field interaction computation (i.e.,
the FMM computation).
\item $T_{\nt}$: The time spent on the near field interaction computation,
including the subtraction of spurious contributions from the far field
interaction computation (see \cite{leslie,volint}).
\item $T_{\st}$: The time spent on the self-interaction computation, including
the subtraction of spurious contributions from the far field
interaction computation (see \cite{leslie,volint}).
\item $T_{\text{tot}}$: The total time for the evaluation of the volume potential at all
of the discretization nodes.
\item $\frac{\#\text{tgt}}{\text{sec}}$: 
The number of targets at which the Newtonian potential can be evaluated per
second using our algorithm, including the precomputation cost.
\item $E_{\text{poi}}$: The largest absolute error of the solution to
Poisson's equation at all of the target nodes.
\item $E_{\text{den}}$: The $L^\infty$ monomial approximation
error of the density function over the whole domain, estimated by comparing
the approximated function values at 40,000 uniformly sampled points over
the domain with the true function values.
\item $E_{\text{tol}}$: The error tolerance of the adaptive version of our 
algorithm. Used in the experiment with an inhomogeneous density function.
\end{itemize}

\subsection{Bivariate polynomial interpolation in the monomial basis}
\label{sec:bimo}

In this section, we present numerical experiments to demonstrate the
feasibility of bivariate polynomial interpolation in the monomial basis.

Let $\Delta_1$ be an equilateral triangle with vertices
$(-1,0),(1,0)$ and $(0, \sqrt{3})$, and let $\Delta_2$ be a flattened triangle
with vertices $(-1,0),(1,0)$ and $(0, \frac{1}{16})$.  In Figures
\ref{fig:taylor1} and \ref{fig:taylor2}, we estimate $L^\infty$ errors of
bivariate polynomial interpolation in the 2-D monomial basis and in the
Koornwinder polynomial basis \cite{bremer1} (i.e., the orthogonal polynomial
basis over a triangle) over the domains $\Delta_1$ and $\Delta_2$ with the
Vioreanu-Rokhlin collocation points, for different orders of approximation
$N$, by comparing the values of the two approximations with the true values
of the function at 20,000 uniformly sampled points over the domain.  We also
report the extra numerical error that arises from the use of monomial basis
(i.e., $u\cdot \norm{a^{(N)}}_2$; see Theorem \ref{thm:mono_err}).  One can
observe that the use of the monomial basis induces essentially no extra loss
of accuracy in both cases.

\begin{figure}[h]
    \centering
    \begin{subfigure}{0.49\textwidth}
      \centering
      \includegraphics[width=\textwidth]{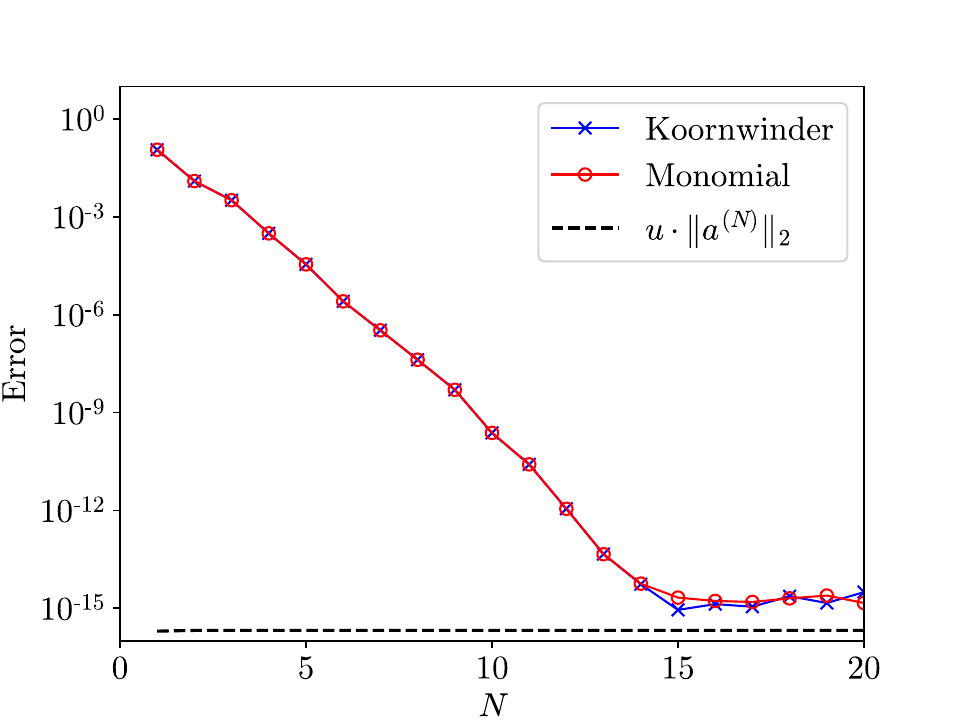}
      \caption{$f(x,y)=e^{-(x^2+y^2)/8}$\label{fig:taylor1_a}}
    \end{subfigure}
    \begin{subfigure}{0.49\textwidth}
      \centering
      \includegraphics[width=\textwidth]{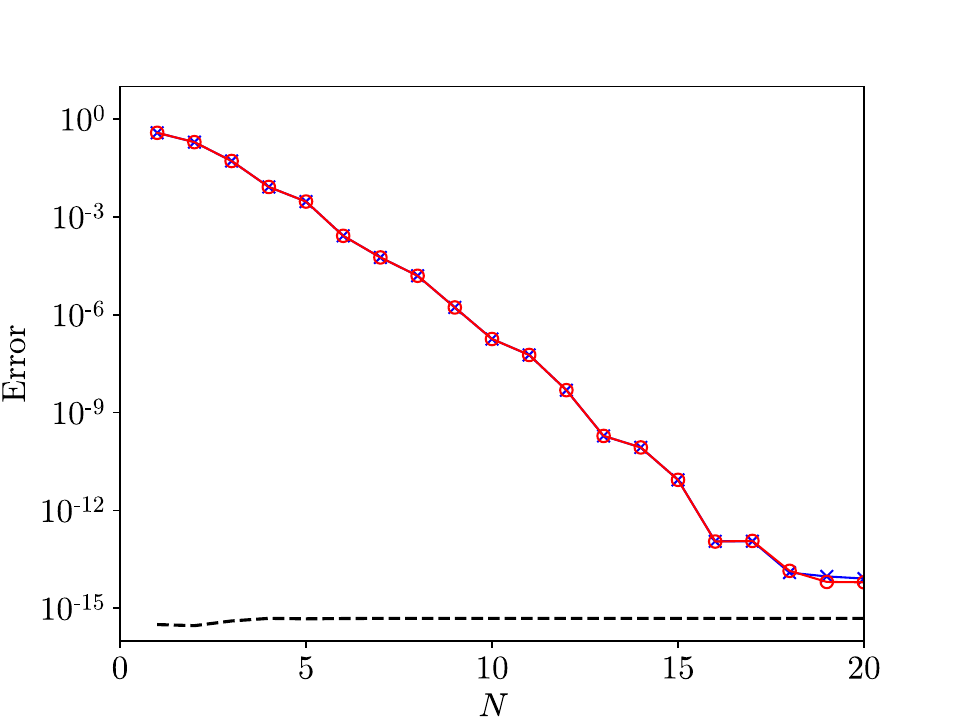}
      \caption{$f(x,y)=\sin(\frac{1}{2}xy+x+y)$\label{fig:taylor1_b}}
    \end{subfigure}
  \begin{subfigure}{0.49\textwidth}
    \centering
    \includegraphics[width=\textwidth]{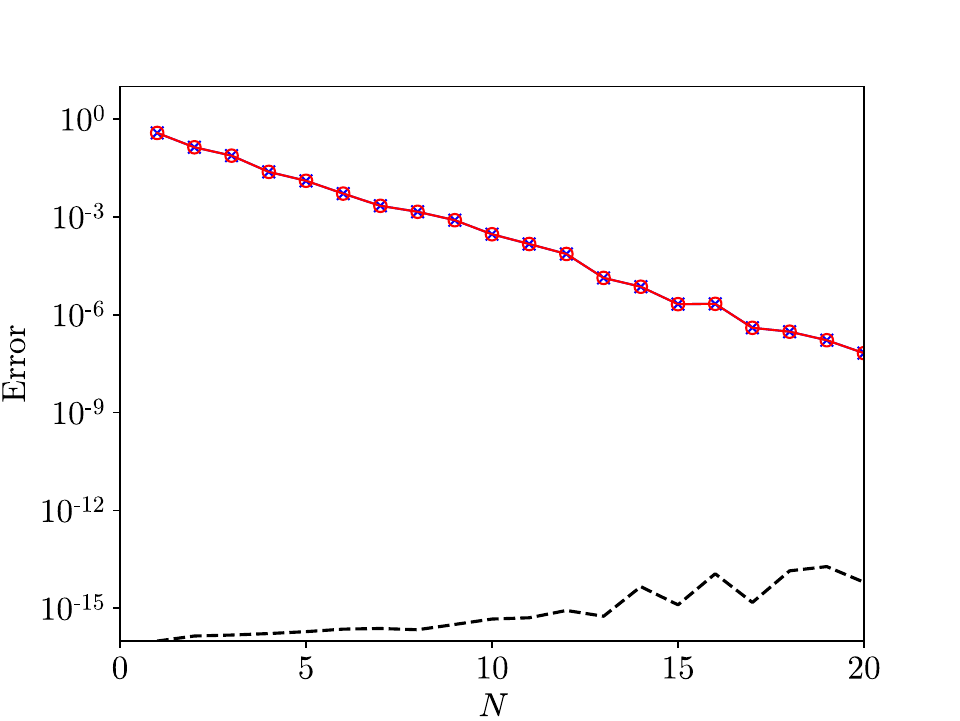}
    \caption{$f(x,y)=\frac{1}{x^2+(y+1)^2}$\label{fig:taylor1_c}}
  \end{subfigure}
  \begin{subfigure}{0.49\textwidth}
    \centering
    \includegraphics[width=\textwidth]{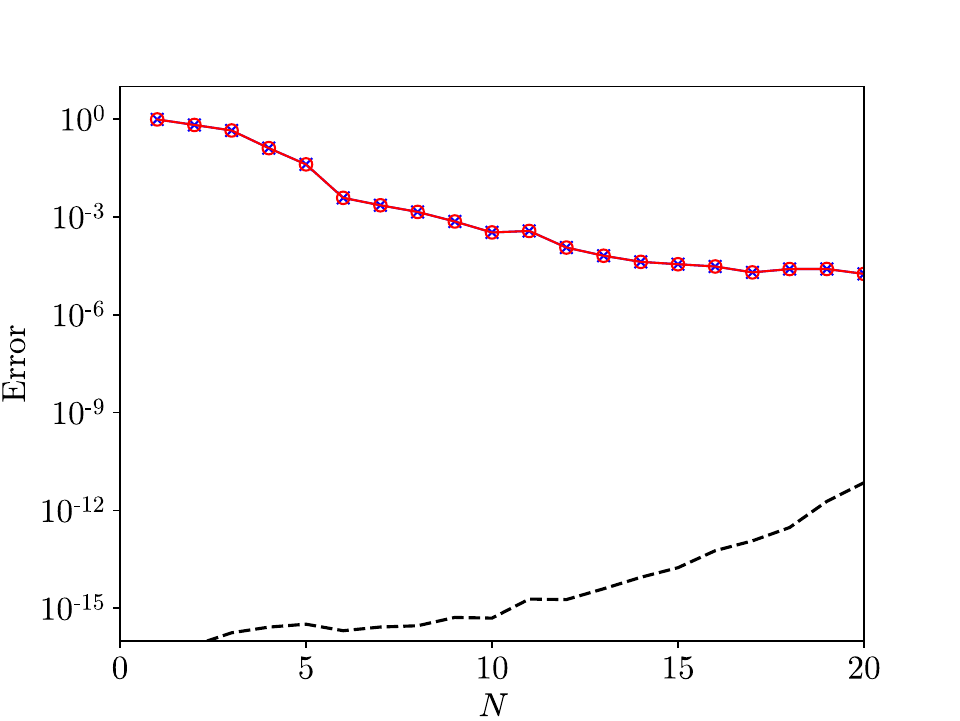}
    \caption{$f(x,y)=\abs{x}^{5.5}$}
  \end{subfigure}

  \caption{{\bf Bivariate polynomial interpolation over the equilateral
  triangle $\Delta_1$ by monomials and Koornwinder polynomials}. }
  \label{fig:taylor1}
\end{figure}

\begin{figure}[h]
    \centering
    \begin{subfigure}{0.49\textwidth}
      \centering
      \includegraphics[width=\textwidth]{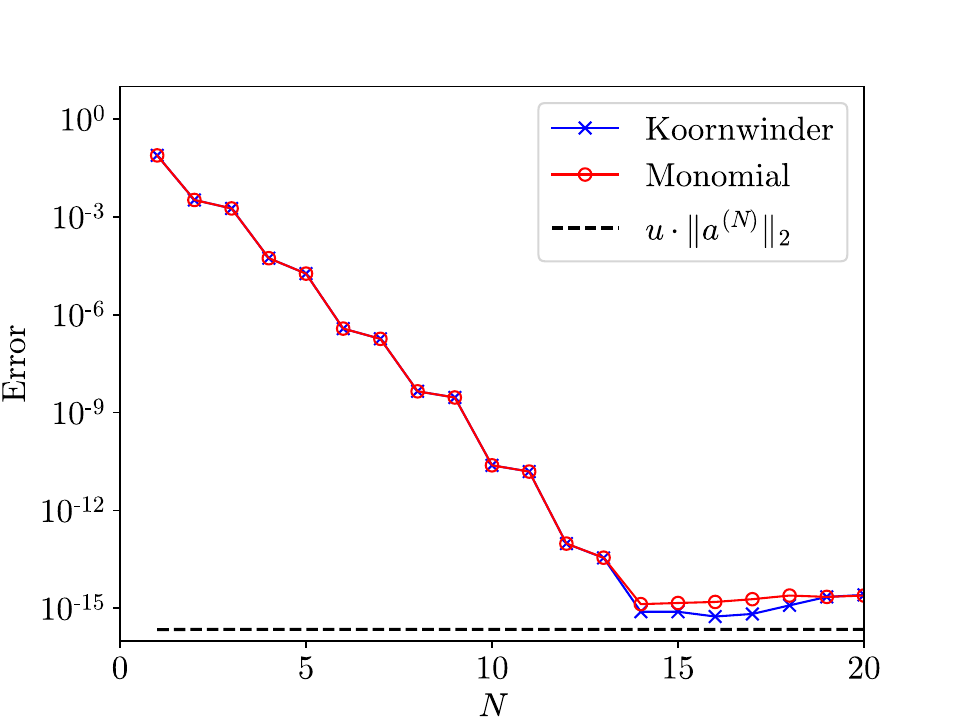}
      \caption{$f(x,y)=e^{-(x^2+y^2)/8}$}
    \end{subfigure}
    \begin{subfigure}{0.49\textwidth}
      \centering
      \includegraphics[width=\textwidth]{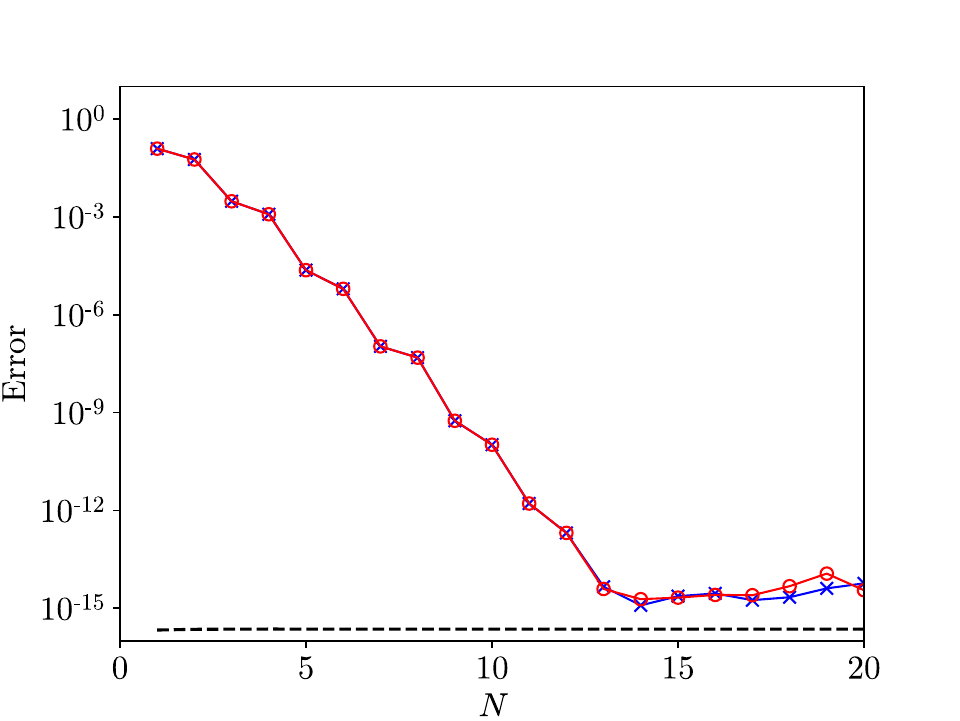}
      \caption{$f(x,y)=\sin(\frac{1}{2}xy+x+y)$}
    \end{subfigure}
  \begin{subfigure}{0.49\textwidth}
    \centering
    \includegraphics[width=\textwidth]{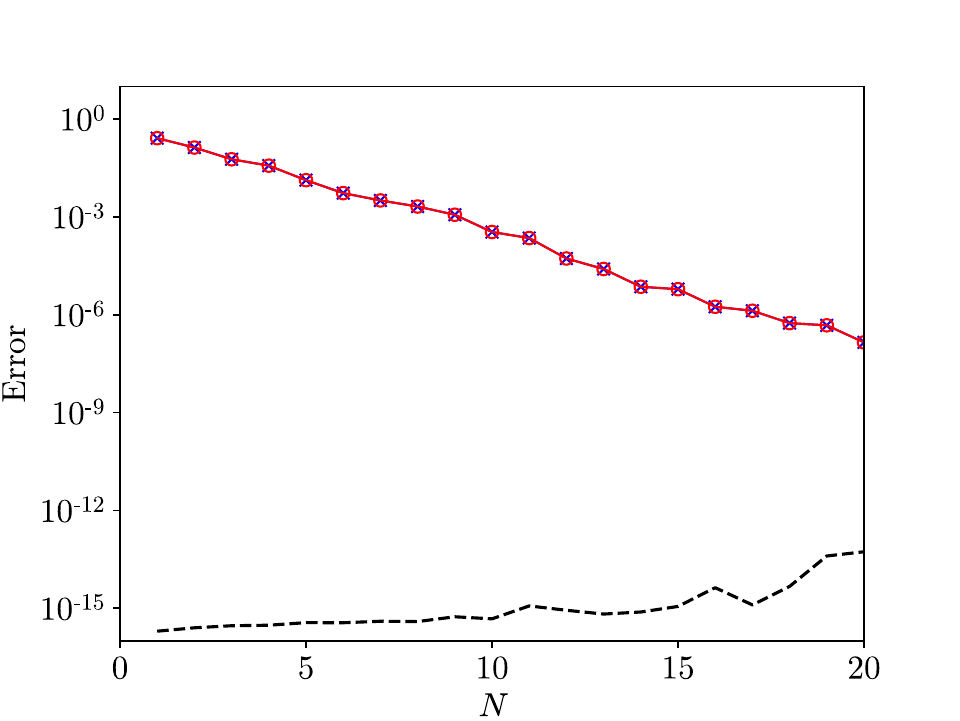}
    \caption{$f(x,y)=\frac{1}{x^2+(y+1)^2}$}
  \end{subfigure}
  \begin{subfigure}{0.49\textwidth}
    \centering
    \includegraphics[width=\textwidth]{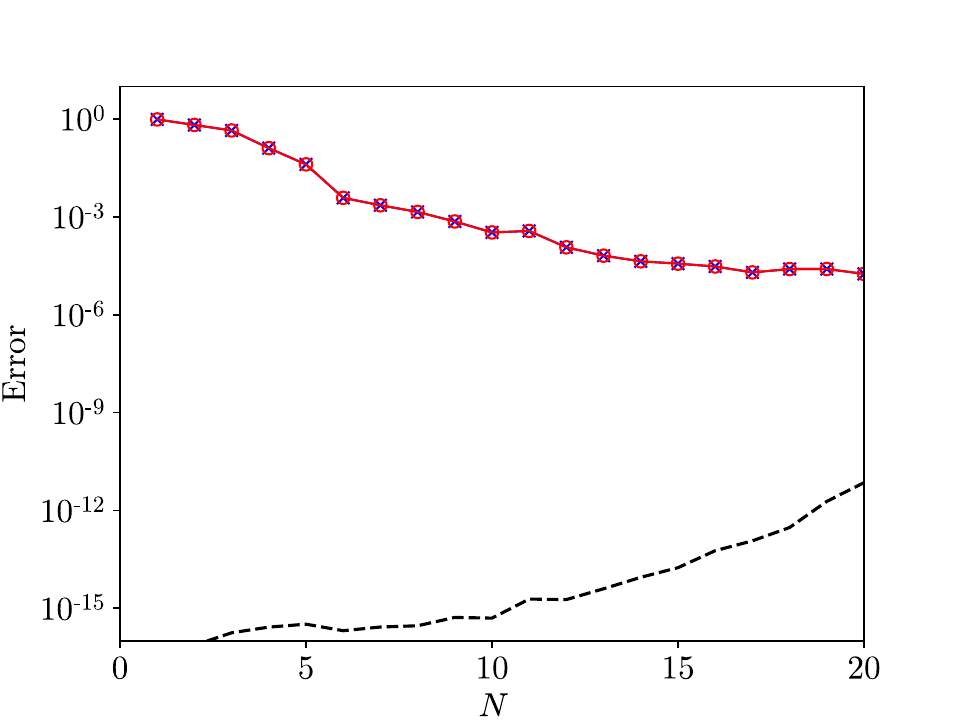}
    \caption{$f(x,y)=\abs{x}^{5.5}$}
  \end{subfigure}

  \caption{{\bf Bivariate polynomial interpolation over the flattened 
  triangle $\Delta_2$ by monomials and Koornwinder polynomials}. }
  \label{fig:taylor2}
\end{figure}

Now let $\tDelta$ be a curved triangle, given by the formula
  \begin{align}
\tDelta:=\{(r\cos\theta-1,r\sin\theta)\in \R^2 : 0\leq r\leq
2,0\leq \theta\leq \pi/3\}.
  \label{for:experiment_curv}
  \end{align}
We repeat the previous experiment on $\tDelta$, and show the results   
in Figure \ref{fig:ctaylor1}. One can observe that the performance of the
monomial approximation over a curved triangle is almost identical to the
triangle domain case.

\begin{figure}[h]
    \centering
    \begin{subfigure}{0.49\textwidth}
      \centering
      \includegraphics[width=\textwidth]{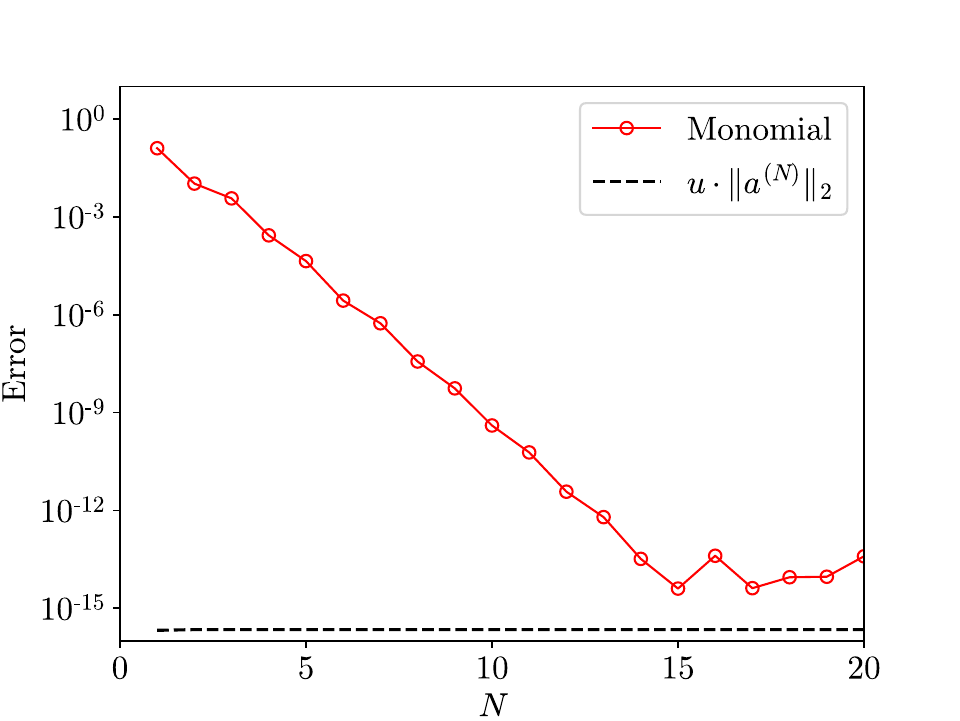}
      \caption{$f(x,y)=e^{-(x^2+y^2)/8}$\label{fig:ctaylor1_a}}
    \end{subfigure}
    \begin{subfigure}{0.49\textwidth}
      \centering
      \includegraphics[width=\textwidth]{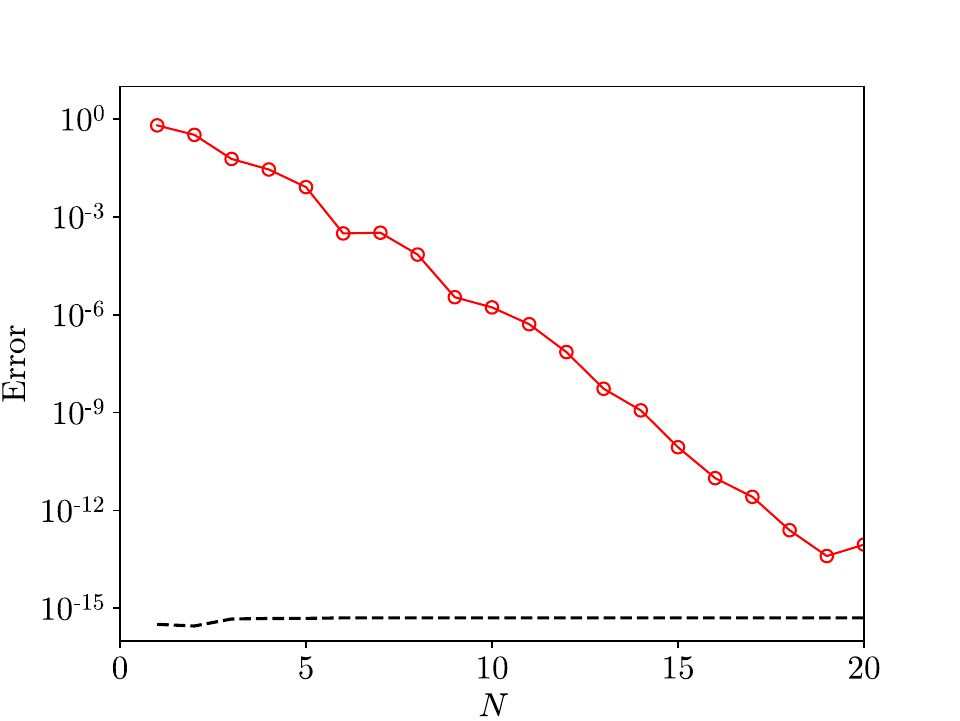}
      \caption{$f(x,y)=\sin(\frac{1}{2}xy+x+y)$\label{fig:ctaylor1_b}}
    \end{subfigure}
  \begin{subfigure}{0.49\textwidth}
    \centering
    \includegraphics[width=\textwidth]{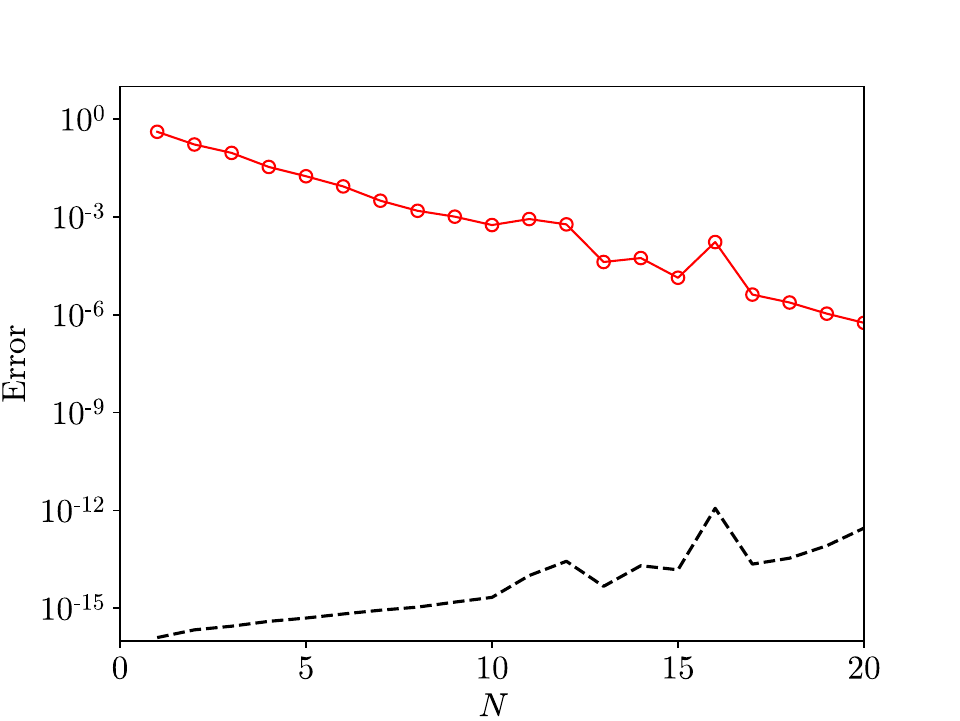}
    \caption{$f(x,y)=\frac{1}{x^2+(y+1)^2}$}
  \end{subfigure}
  \begin{subfigure}{0.49\textwidth}
    \centering
    \includegraphics[width=\textwidth]{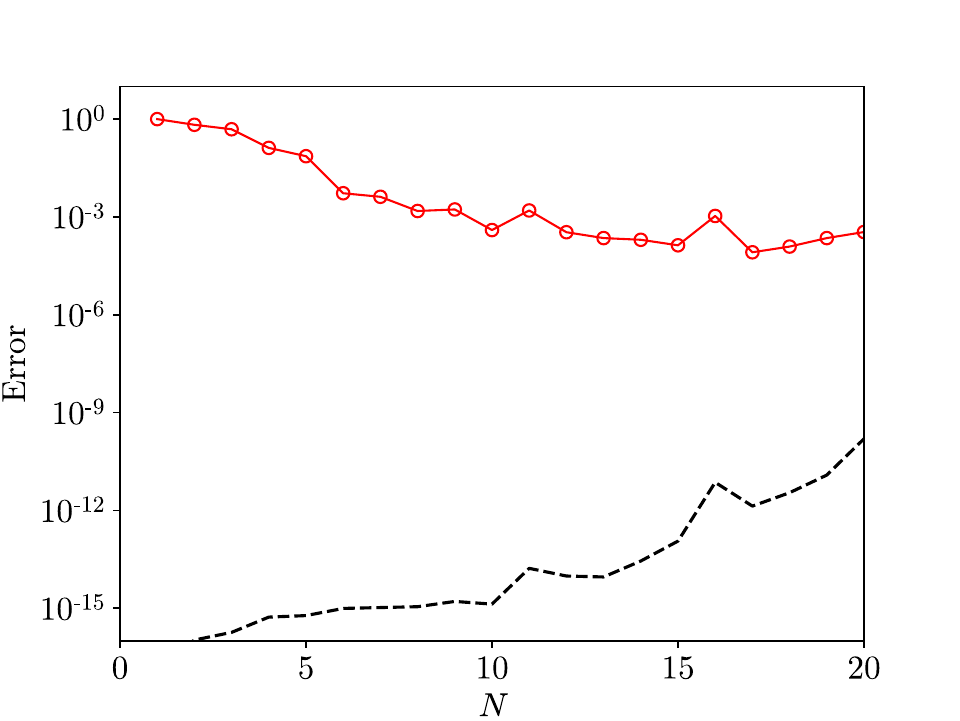}
    \caption{$f(x,y)=\abs{x}^{5.5}$}
  \end{subfigure}

  \caption{{\bf Bivariate polynomial interpolation over the curved triangle
  element~$\tDelta$ by monomials}.}
  \label{fig:ctaylor1}
\end{figure}

\subsection{Newtonian potential generated over a mesh element}

In this section, we compare our algorithm with adaptive integration for 
Newtonian potential evaluation.
We set the domain to be $\Delta:=\{(x,y)\in \R^2 : 0\leq
x\leq 1,0\leq y\leq 1-x\}$, the density function to be
$f(x,y)=\cos(5xy)+\sin(2x+1)+\cos(3y-1)$, and the target location to be
$(0.5,-h)$, for various $h$.  In the adaptive integration computation, we
used the Vioreanu-Rokhlin rule of order $N_{\text{ord}}$, equipped with the
error control technique introduced in \cite{volint} to align the error with
the error of our algorithm. To make the adaptive integration speed
independent of the complexity of the density function, we excluded the time
spent on the density function evaluations on the first level, but included
the time spent on interpolating the density function values to the next
level (see \cite{bremer1} for a fast interpolation technique).  Furthermore,
we accelerated the application of the interpolation matrix by LAPACK, and
fine-tuned the baseline to make the comparisons fair.  The reference
solutions were computed using extremely high-order adaptive integration.  In
Table \ref{tab:near_speedup}, we report the speed-up of the close evaluation
of the Newtonian potential obtained by our algorithm after the
precomputation, compared to the conventional adaptive integration-based
approach, for different orders of approximation. We do not include a similar
experiment that demonstrates the speed-up of the self-evaluation, since a
fair experiment requires the speed of the adaptive integration-based
approach to be independent of the cost of evaluating the density function.
We note that, even when the density function is a constant (so that its
evaluation is free), the speed of the self-evaluation by our algorithm is
significantly faster than the adaptive integration-based approach when the
target point is close to the boundary of the domain.  In Figure
\ref{fig:speed1}, we report the speeds for the close and self-evaluations,
and of the precomputations, for various orders of approximations.  In Figure
\ref{fig:pot_heat}, we show the plots of the computational errors of the
Newtonian potential.

\begin{table}[h!!]
    \begin{center}
    \begin{tabular}{cc ccc cc}
     $N_{\text{ord}}$ & $h$ & $S_{\text{exps}}$ & $S_{\text{adap}}$ &
     $\frac{S_{\text{exps}}}{S_{\text{adap}}}$& $E_{\text{exps}}$
     &$E_{\text{adap}}$\\
    \midrule
    \addlinespace[.5em]
    8& $2\e{-1}$   & 9.29\e{5} & 3.56\e{5} & 2.61 & 4.07\e{-8}  & 1.37\e{-7}\\
    \addlinespace[.25em]
     &  $2\e{-2}$   & 1.19\e{6} & 2.02\e{5} & 5.88 &  3.06\e{-8} &9.73\e{-8}\\
    \addlinespace[.25em]
    &  $2\e{-3}$   & 1.19\e{6} & 6.13\e{4} & 19.4 & 4.89\e{-8} &2.78\e{-7}\\
    \addlinespace[.25em]
    &  $2\e{-4}$   & 1.19\e{6} & 5.39\e{4}& 22.1 &  5.10\e{-8} & 5.35\e{-8}\\
    \addlinespace[.25em]
     &  $2\e{-5}$   & 1.19\e{6} & 3.95\e{4} & 30.1 &  5.12\e{-8} & 7.03\e{-8}\\
    \addlinespace[.5em]
    14& $2\e{-1}$   & 1.04\e{6} & 5.07\e{4} & 20.6 & 9.42\e{-13}  & 9.41\e{-13}\\
    \addlinespace[.25em]
     &  $2\e{-2}$   & 1.04\e{6} & 1.51\e{4} & 69.0 &  1.69\e{-11} &1.20\e{-11}\\
    \addlinespace[.25em]
    &  $2\e{-3}$   & 1.04\e{6} & 8.21\e{3} & 127 & 2.27\e{-11} &5.83\e{-11}\\
    \addlinespace[.25em]
    &  $2\e{-4}$   & 1.04\e{6} & 5.64\e{3}& 185 &  2.34\e{-11} & 5.30\e{-11}\\
    \addlinespace[.25em]
     &  $2\e{-5}$   & 1.04\e{6} & 4.09\e{3} & 255 &  2.35\e{-11} & 5.66\e{-11}\\
    \addlinespace[.5em]
    20& $2\e{-1}$   & 7.40\e{5} & 1.19\e{4} & 62.2 & 7.77\e{-16}  & 1.00\e{-16}\\
    \addlinespace[.25em]
     &  $2\e{-2}$   & 7.41\e{5} & 3.11\e{3} & 238 &  4.16\e{-16} &8.33\e{-17}\\
    \addlinespace[.25em]
    &  $2\e{-3}$   & 7.41\e{5} & 1.56\e{3} & 474 & 8.60\e{-16} &2.03\e{-15}\\
    \addlinespace[.25em]
    &  $2\e{-4}$   & 7.43\e{5} & 1.00\e{3}& 741 &  1.05\e{-15} & 6.38\e{-16}\\
    \addlinespace[.25em]
     &  $2\e{-5}$   & 7.42\e{5} & 8.09\e{2} & 917 &  8.33\e{-16} & 5.83\e{-16}\\
    \addlinespace[.25em]
    \end{tabular}
    \caption{
    {\bf The speed-up of close evaluation of Newtonian potentials generated
    over a single mesh element $\Delta$}. 
    }
    \label{tab:near_speedup}
    \end{center}
\end{table}

\begin{figure}[h]
    \centering
    \begin{subfigure}{0.45\textwidth}
      \centering
      \includegraphics[width=\textwidth]{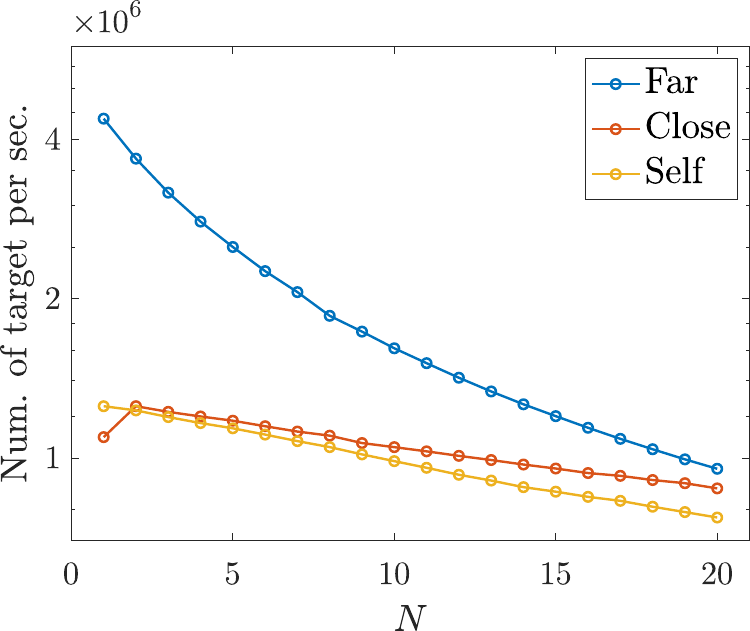}
      \caption{}
\label{fig:speed1_a}
    \end{subfigure}
    \quad
    \begin{subfigure}{0.45\textwidth}
      \centering
      \includegraphics[width=\textwidth]{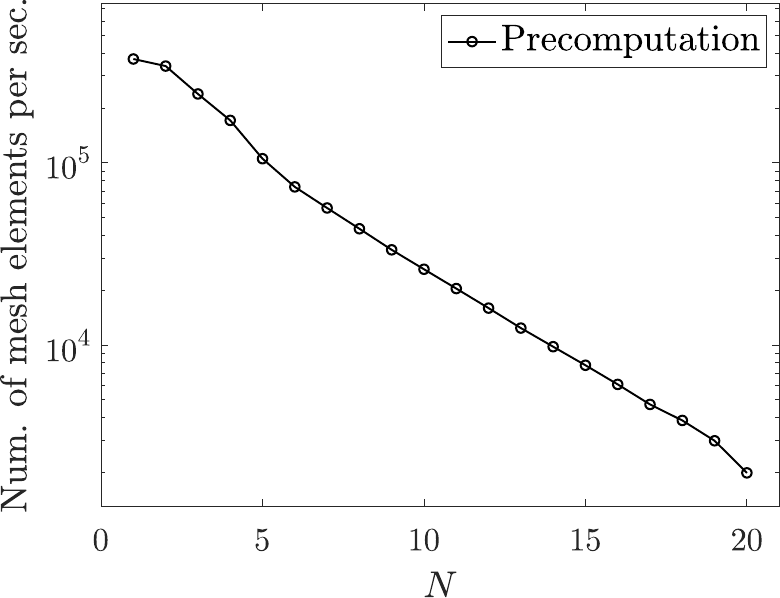}
      \caption{}
\label{fig:speed1_b}
    \end{subfigure}
  \caption{{\bf The speed of the far, close, and self-evaluations, and of
  the precomputations, over a single mesh element $\Delta$}. The $x$-axis
  denotes the order of approximation. The $y$-axes in (a) and (b) denote the
  number of targets the volume potential can be evaluated at and the number
  of mesh elements that the precomputation can be executed on, per second,
  respectively.   The label ``Far'' denotes the evaluation of the Newtonian
  potential via (\ref{for:green3}) by a Gauss-Legendre rule of order $N+2$
  over every edge of the mesh element. 
  The labels ``Close'' and ``Self'' denote the close and self-evaluation of
  the Newtonian potential via (\ref{for:green3}) by the Helsing-Ojala method
  of order $N+2$ over every edge of the mesh element. 
  }
  \label{fig:speed1}
\end{figure}

\begin{figure}[h]
    \centering
    \begin{subfigure}{0.48\textwidth}
      \centering
      \includegraphics[width=\textwidth]{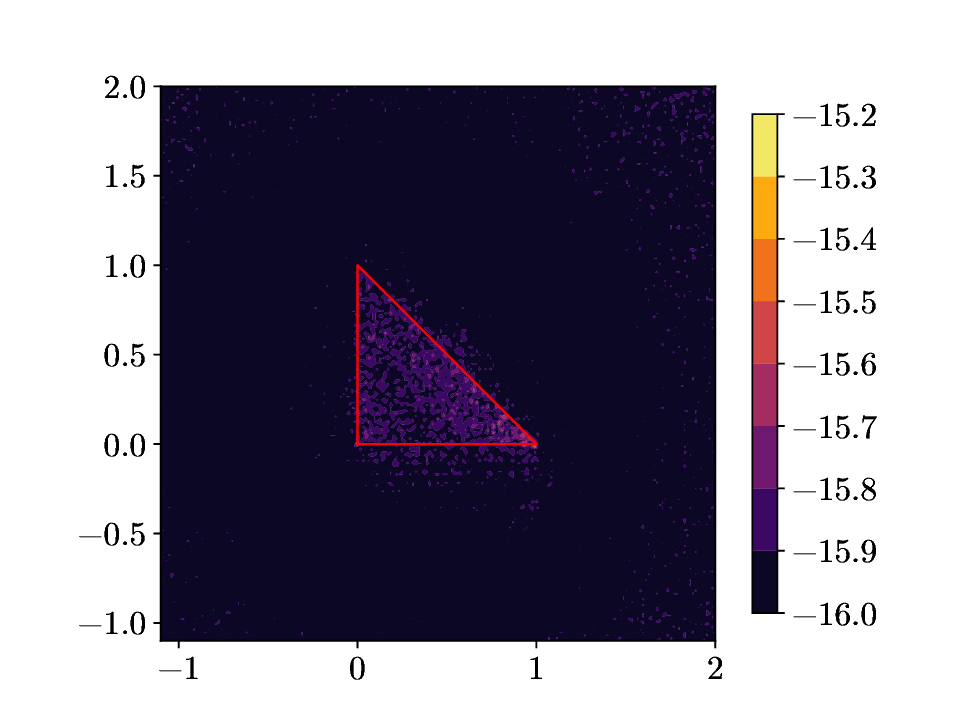}
      \caption{}
    \end{subfigure}
    \,\,\,
    \begin{subfigure}{0.48\textwidth}
      \centering
      \includegraphics[width=\textwidth]{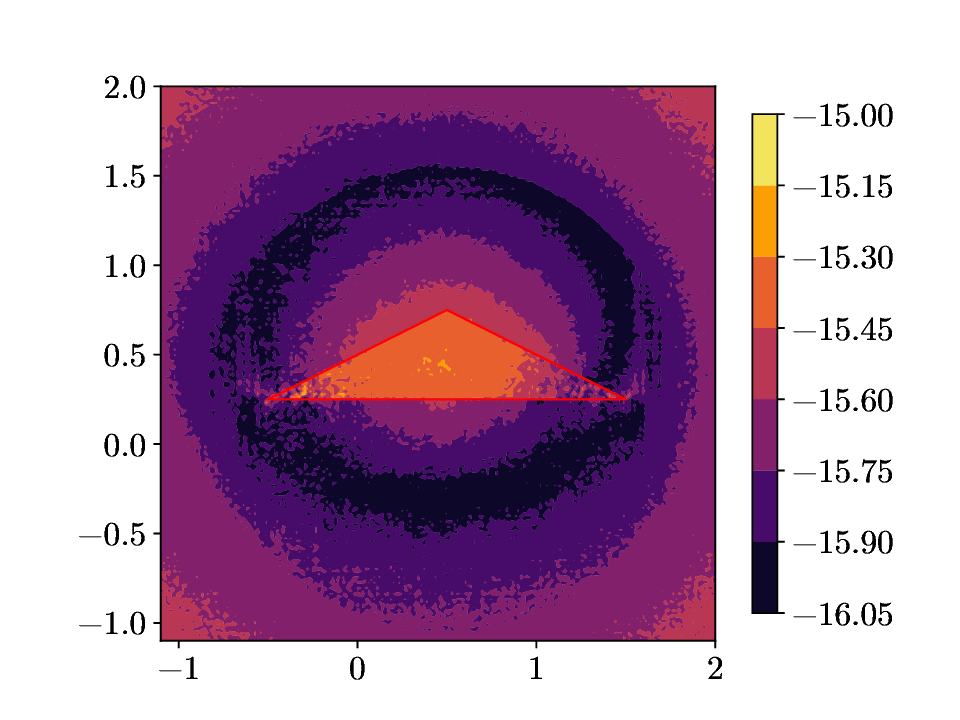}
      \caption{}
    \end{subfigure}

  \caption{{\bf Error contours}. We set the density function to be
  $f(x,y)=\sin(\frac{1}{2}xy+x+y)$, and the orders of the 2-D and 1-D
  monomial approximations to be 20 and 22, respectively.}
  \label{fig:pot_heat}
\end{figure}

\subsection{Poisson's equation}
In this section, we report the performance of our Newtonian potential
evaluation algorithm in the context of solving Poisson's equation 
  \begin{align}
\hspace*{-0em}\nabla^2 \varphi = &\,f \text{ in } \Omega, \notag\\
\hspace*{-0em}\varphi=&\,g \text{ on } \partial\Omega,
\label{for:poi}
  \end{align}
where 
\begin{align}
f(x,y)=9\cos(9x)\sin(6y)+16\cos\bigl(16y+\frac{8}{5}\bigr)-12\sin(12x),
\end{align}
and
\begin{align}
  \hspace*{-5.5em}g(x,y)=\varphi(x,y)|_{\partial\Omega}=&\,
  \Bigl(\frac{1}{12}\sin(12x)-\frac{1}{16}\cos\bigl(16y+\frac{8}{5}\bigr)
  -  \frac{1}{13}\cos(9x)\sin(6y)\Bigr)\Big|_{\partial\Omega},
\end{align}
for the domain $\Omega$ is displayed in Figure \ref{fig:pot_domain}.  The
boundary of this domain was created by the following procedure. First, we
fit a $C^\infty$ curve through a collection of points using the algorithm
described in~\cite{mohan}, and then up-sampled the curve to produce 
a new collection of data points. The final curve was constructed by
interpolating these new points using the interpolation formula and
$C^\infty$ interpolatory basis introduced in~\cite{zhang-ma}, where the
basis functions are translates of the product of the sinc function and the
Gaussian $\exp(-ax^2)$, with the parameter $a=0.1$. The final curve can be
recreated from this interpolation formula, together with the data points,
which we provide in~\url{https://doi.org/10.5281/zenodo.8401488}. Once the
boundary of $\Omega$ has been constructed, a mesh is created on $\Omega$
using the Gmsh mesh generator. To ensure that the boundary is well-resolved
by the triangle mesh, we post-process the resulting mesh
by performing additional subdivision of curved triangles on
high curvature regions.

It is easy to verify that the solution $\varphi$ to Poisson's equation
\eqref{for:poi} can be expressed as the sum of the Newtonian potential $u$
with density function $f$ over the domain $\Omega$, and the solution to the
Laplace equation
  \begin{align}
\nabla^2 u^h =&\, 0 \quad\text{in $\Omega$}, \notag\\
u^h=&\,g-u \quad \text{on $\partial\Omega$}. 
  \label{for:lap}
  \end{align}
In our implementation, we first compute $u$ using our algorithm, and then solve
the Laplace equation~(\ref{for:lap}) using the boundary integral equation
method~\cite{fds}.

In our first experiment, we aim to validate both the convergence rate and
the computational efficiency of our algorithm. To achieve this, we
discretize the domain $\Omega$ using a quasi-uniform triangular mesh. We
provide the problem sizes in Tables~\ref{tab:poi_prob_size22}
and~\ref{tab:poi_prob_size}.
Subsequently, we present a detailed performance analysis of our algorithm in
Table~\ref{tab:poi1}, and visualize its convergence rate in
Figure~\ref{fig:ord}. Our results demonstrate that our algorithm exhibits
the anticipated order of convergence. Additionally, when
the order is not extremely high (e.g., $\leq 14$), the time spent on
precomputation is small, and the computational costs associated
with near and self-interaction are comparable to those of the far field
interaction computation.

\begin{figure}[h]
    \centering
    \begin{subfigure}{0.45\textwidth}
      \centering
      \includegraphics[width=\textwidth]{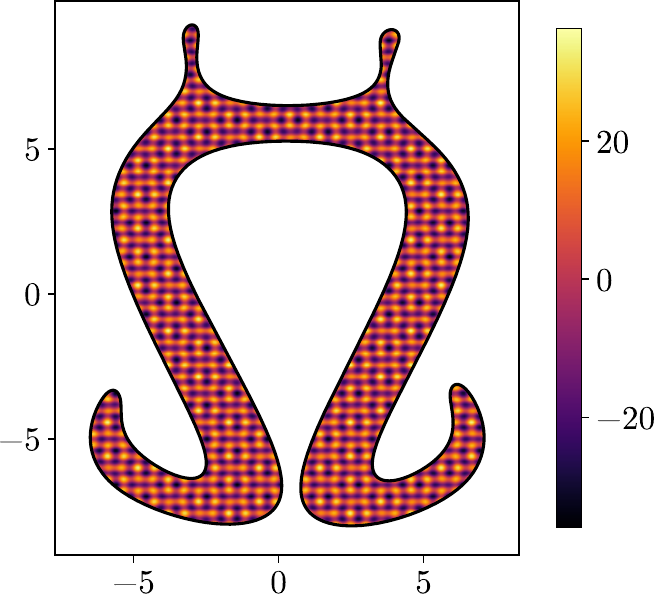}
      \caption{}
    \end{subfigure}
    \quad
    \begin{subfigure}{0.356\textwidth}
      \centering
      \includegraphics[width=\textwidth]{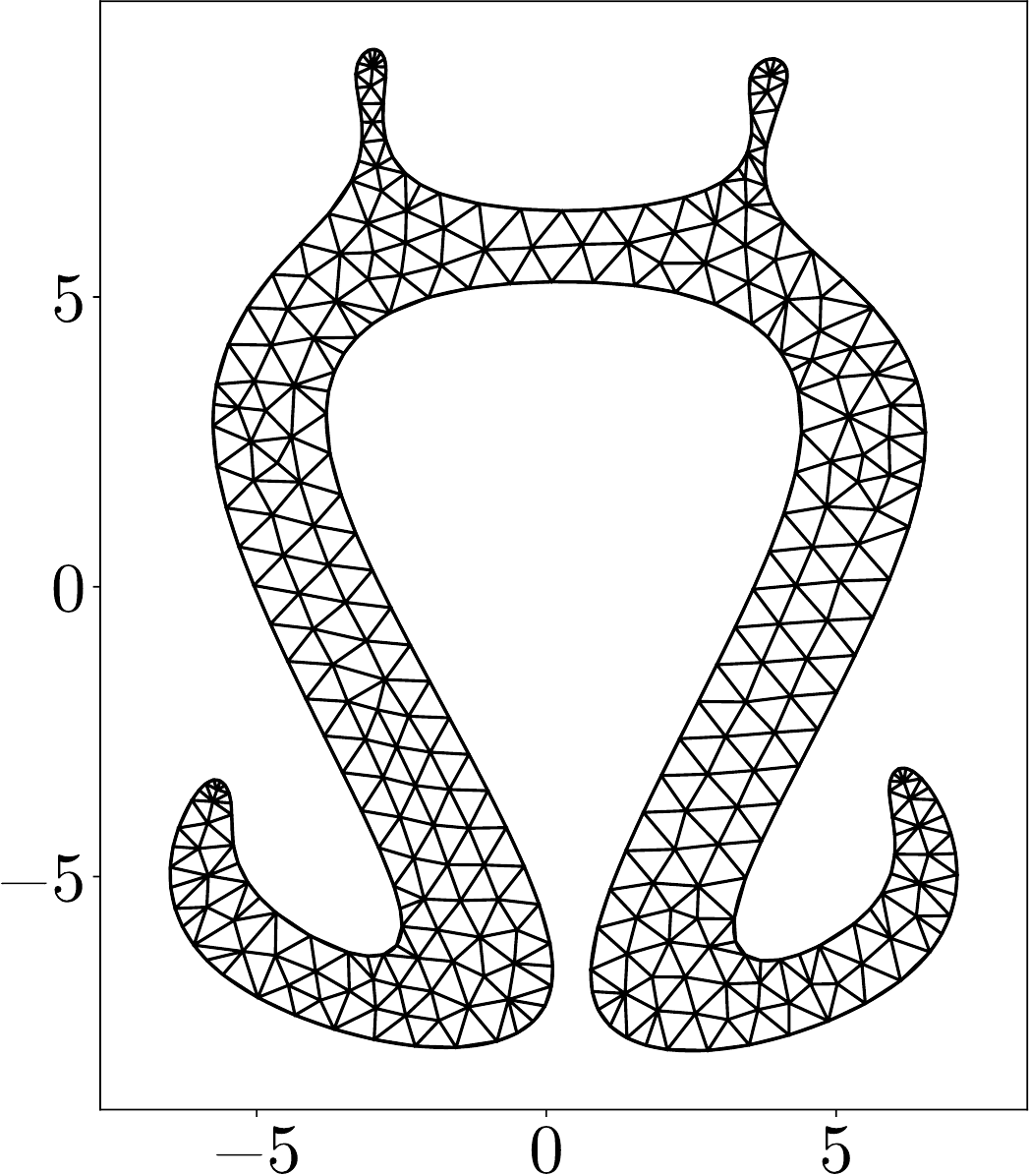}
      \caption{}
    \end{subfigure}
  \caption{{\bf The domain $\Omega$, the density function $f$, and an example mesh
  where~$h_0^{\max}=0.99$}.
  }
  \label{fig:pot_domain}
\end{figure}

\begin{table}[h!!]
    \begin{center}
    \begin{tabular}{cccc}
    $h_0^{\max}$ & $N_{\text{elem}}$ & $A^{\max}$ & $h_0^{\min}/h_0^{\max}$\\
    \midrule
    \addlinespace[.5em]
     0.99& 542 & 1.96 & 0.16\\
    \addlinespace[.25em]
      0.55 & 1844 & 4.20 & 0.17\\
    \addlinespace[.25em]
      0.28 & 6964 & 45.5 & 0.17\\
    \end{tabular}
    \caption{
    {\bf The total number of mesh elements, the maximum diameter
    (i.e., $h_0^{\max}$), aspect ratio (i.e., $A^{\max}$) of mesh
    elements, and the ratio between the largest and smallest
    triangles (i.e., $h_0^{\min}/h_0^{\max}$), for different meshes used in
    Poisson's equation experiment}. Note that the ratio between the minimum diameter and
    the maximum diameter of mesh elements is small, because the mesh size
    has to be smaller inside the two lobes (see Figure \ref{fig:pot_domain}).
    }
    \label{tab:poi_prob_size22}
    \end{center}
\end{table}

\begin{table}[h!!]
    \begin{center}
    \begin{tabular}{ccccc}
    $N_{\text{ord}}$&$h_0^{\max}$  &
    $N_{\text{tot}}^{\text{src}}$ & $N_{\text{tot}}^{\text{tgt}}$ & $\frac{N_{\text{tot}}^{\text{src}}}{N_{\text{tot}}^{\text{tgt}}}$  \\
    \midrule
    \addlinespace[.5em]
    8 & 0.99& 17886 & 24390 & \multirow{3}{*}{73.3\%}\\
    \addlinespace[.25em]
     & 0.55 &  60852 & 82980& \\
    \addlinespace[.25em]
     & 0.28 & 229812 & 313380 & \\
    \addlinespace[.5em]
    14 & 0.99& 27642 & 65040 & \multirow{3}{*}{42.5\%}\\
    \addlinespace[.25em]
     & 0.55 & 94044 &  221280 & \\
    \addlinespace[.25em]
     & 0.28 & 355164 & 835680 &\\
    \addlinespace[.5em]
    20 & 0.99& 37398 & 125202 &\multirow{3}{*}{29.9\%}\\
    \addlinespace[.25em]
     & 0.55 &  127236 &425964&\\
    \addlinespace[.25em]
     & 0.28 &480516& 1608684&\\
    \end{tabular}
    \caption{
    {\bf The total number of sources and targets for different orders of
    approximation $N_{\text{ord}}$ and for different mesh sizes $h_0$
    used in Poisson's equation experiment}.  We
    note that the total number of the mesh elements equals $552$, $1844$,
    $6898$ when $h_0=0.6$, $0.3$, $0.15$, respectively.  The sources are the
    Gauss-Legendre nodes of order $N_{\text{ord}}+2$ along the boundaries of
    the mesh elements, and the targets are the Vioreanu-Rokhlin nodes of
    order $N_{\text{ord}}$ over all mesh elements.  The values in the last
    column are equal to
    $6(N_\text{ord}+3)/\bigl((N_\text{ord}+1)\cdot(N_\text{ord}+2)\bigr)$,
    and show the reduction in the number of far field quadrature nodes, when
    the far field interactions are computed using Green's third identity.
    }
    \label{tab:poi_prob_size}
    \end{center}
\end{table}

\begin{table}[h!!]
    \begin{center}
    \begin{tabular}{ccccccccccc}
    $N_{\text{ord}}$ & $h_0^{\max}$ &$T_{\text{geom}}$&$T_{\text{init}}$& $T_{\ft}$& $T_{\nt}$ & $T_{\st}$ &
    $T_{\text{tot}}$ & $\frac{\#\text{tgt}}{\text{sec}}$& $E_{\text{poi}}$  \\
    \midrule
    \addlinespace[.5em]
    8 & $0.99$ &0.02 &  0.03  & 0.17 & 0.04 & 0.02& 0.29 &    8.53\e{4} &
    1.49\e{-3}\\
    \addlinespace[.25em]
     &  $0.55$  &0.06 & 0.07  & 0.60 & 0.15& 0.08& 0.97&
    8.57\e{4} & 1.01\e{-6}\\
    \addlinespace[.25em]
     &  $0.28$  & $0.24$ &0.26 & 1.72  & 0.67 & 0.32 & 3.21&
    9.77\e{4} & 1.90\e{-9}\\
        \addlinespace[.5em]
    14 & $0.99$  &0.05  &  0.09 & 0.34 & 0.15 & 0.08 &
    0.71 &9.13\e{4}&4.98\e{-7}\\
    \addlinespace[.25em]
     &  $0.55$ &0.18   & 0.31  & 1.05 & 0.81 & 0.36& 2.70 &
    8.19\e{4} &  9.05\e{-12}\\
    \addlinespace[.25em]
     &  $0.28$  &0.58  & 0.93  & 3.65 & 2.70 & 1.15& 9.01&
    9.27\e{4} & 3.75\e{-12}\\
    \addlinespace[.5em]
    20 & $0.99$ &0.20  &  0.96 & 0.99 & 0.33 & 0.18&2.67&
    4.69\e{4} & 7.71\e{-11}\\
    \addlinespace[.25em]
     &  $0.55$ & 0.31 & 2.84  & 2.34 & 1.09 & 0.62& 7.21&
    5.91\e{4} & 7.39\e{-12}\\
    \addlinespace[.25em]
     &  $0.28$ &1.26  & 10.5  & 7.20 & 4.46 & 2.55& 26.0&
    6.19\e{4} &  5.18\e{-12}\\
    \addlinespace[.25em]
    \end{tabular}
    \caption{
    {\bf Performance of our algorithm for solving Poisson's equation}. 
    The smallest value of $E_\text{poi}$ we report is $3.75 \times
    10^{-12}$.  We found that the loss of several digits accuracy was due to
    the condition number associated with the calculation of the curvature of
    $\partial \Omega$, which is represented as a global $C^\infty$ curve.
    Note that the meshes for different values of $N_{\text{ord}}$ with the
    same value of $h_0^{\max}$ are identical. 
    }
    \label{tab:poi1}
    \end{center}
\end{table}

\begin{figure}[h]
    \centering
    \begin{subfigure}{0.49\textwidth}
      \centering
      \includegraphics[width=\textwidth]{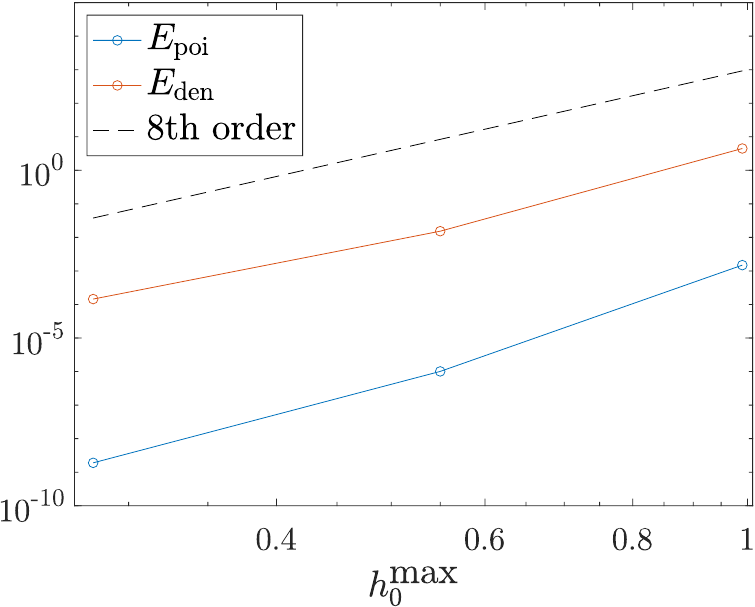}
      \caption{8th order}
    \end{subfigure}
    \begin{subfigure}{0.49\textwidth}
      \centering
      \includegraphics[width=\textwidth]{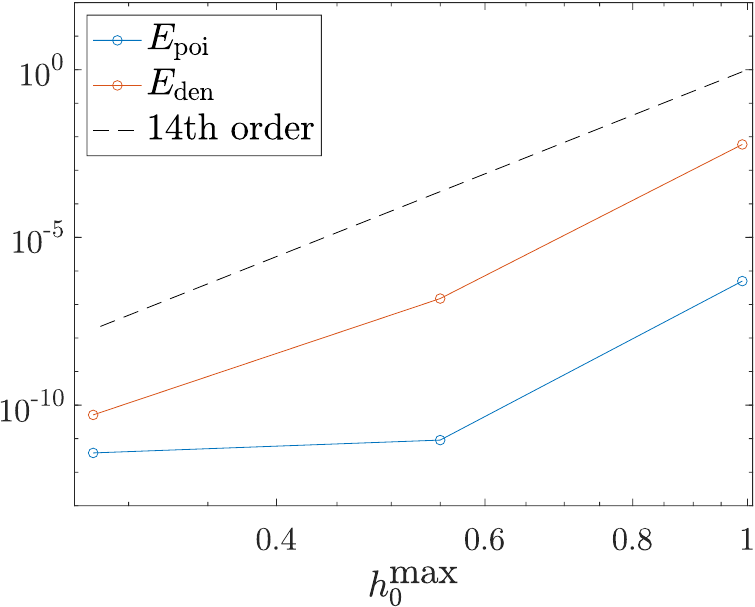}
      \caption{14th order}
    \end{subfigure}
  \caption{{\bf Order of convergence}.  We refer the readers to the caption
  of Table \ref{tab:poi1} for the reason why the error does not decay to
  machine precision in Figure (b). }
  \label{fig:ord}
\end{figure}

In the following example, we assess the performance and accuracy
of our algorithm
for an inhomogeneous density function and a domain
featuring multiscale features. Specifically, we define the density
function as the combination of a smooth function and three highly localized
spikes, given by:
  \begin{align}
\hspace*{-6em} f(x,y) = &\,2-32\sin(4(x+y)) +\notag\\
&\,2\sum_{i=1}^3 e^{-c_i^2
(x-a_i)^2-d_i^2(y-b_i)^2}\bigl(-c_i^2-d_i^2+2c_i^4(x-a_i)^2+2d_i^4(y-b_i)^2\bigr),
  \end{align}
where $(a_1,b_1,c_1,d_1):=(-3,6,10,10)$, $(a_2,b_2,c_2,d_2):=(2,-7,5,10)$
and $(a_3,b_3,c_3,d_3):=(3.8,8.6,15,20)$.
We set the boundary condition of Poisson's equation to be
\begin{align}
  \hspace*{-6.5em}g(x,y)=\varphi(x,y)|_{\partial\Omega}=&\,
  \sin(4(x+y))+(x^2-3y+8) + \sum_{i=1}^3 e^{-c_i^2
  (x-a_i)^2-d_i^2(y-b_i)^2}\Big|_{\partial\Omega}.
\end{align}
Additionally, we introduce a tiny lobe to the previously defined
domain $\Omega$. The density function and the domain are visualized in
Figure \ref{fig:pot_domain_fine}. We also set the order $N_\text{ord}$ of
our algorithm to be 14.

\begin{figure}[h]
    \centering
    \begin{subfigure}{0.99\textwidth}
      \centering
      \includegraphics[width=\textwidth]{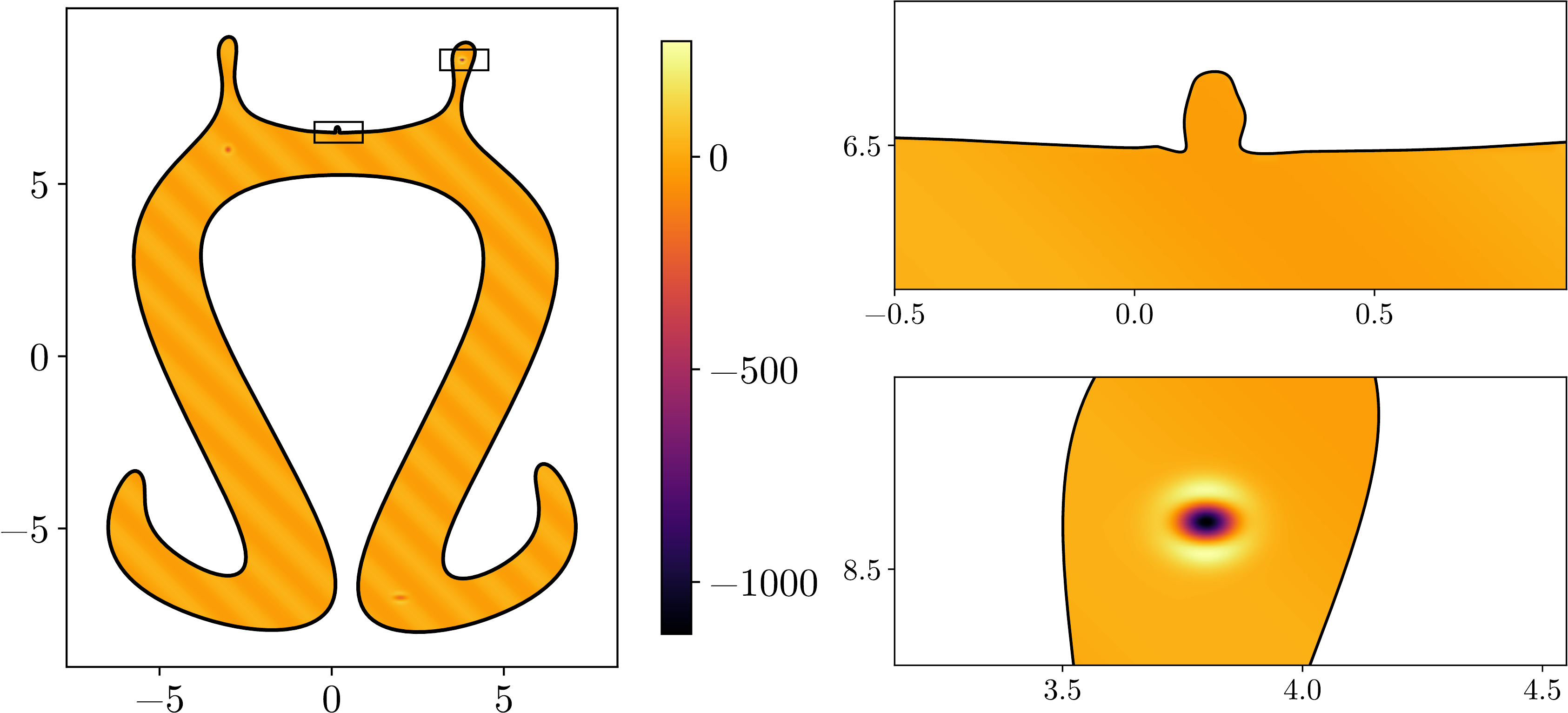}
    \end{subfigure}
  \caption{{\bf The domain $\Omega$ with a tiny lobe, and the inhomogeneous
  density function $f$}.
  }
  \label{fig:pot_domain_fine}
\end{figure}

To accurately represent the inhomogeneous density function, we refine the
initial mesh by 1-to-4 subdivisions until the density function over each
element can be approximated by
a polynomial of degree 14 within a specified error tolerance.
To resolve the tiny lobe, we use Gmsh to subdivide the lobe and its
neighborhood until we achieve the desired reduction in error.  We provide
the problem sizes and the performance of our algorithm in Table
\ref{tab:poi2}, and plot the corresponding meshes in Figure
\ref{fig:pot_domain_fine2}.  Note that the error tolerance required for
resolving the density is typically several orders of magnitude larger than
$E_{\text{tol}}$.

\begin{remark}
In our experiments, we found that the time spent on the far field
interaction computation performed by the FMM has a high variance, so we ran
the program several times and reported the experiment with the smallest FMM
computation time. We note that the runtimes of all other parts of our
algorithm have low variance.  
\end{remark}

\begin{figure}[h]
    \centering
    \begin{subfigure}{0.80\textwidth}
      \centering
      \includegraphics[width=\textwidth]{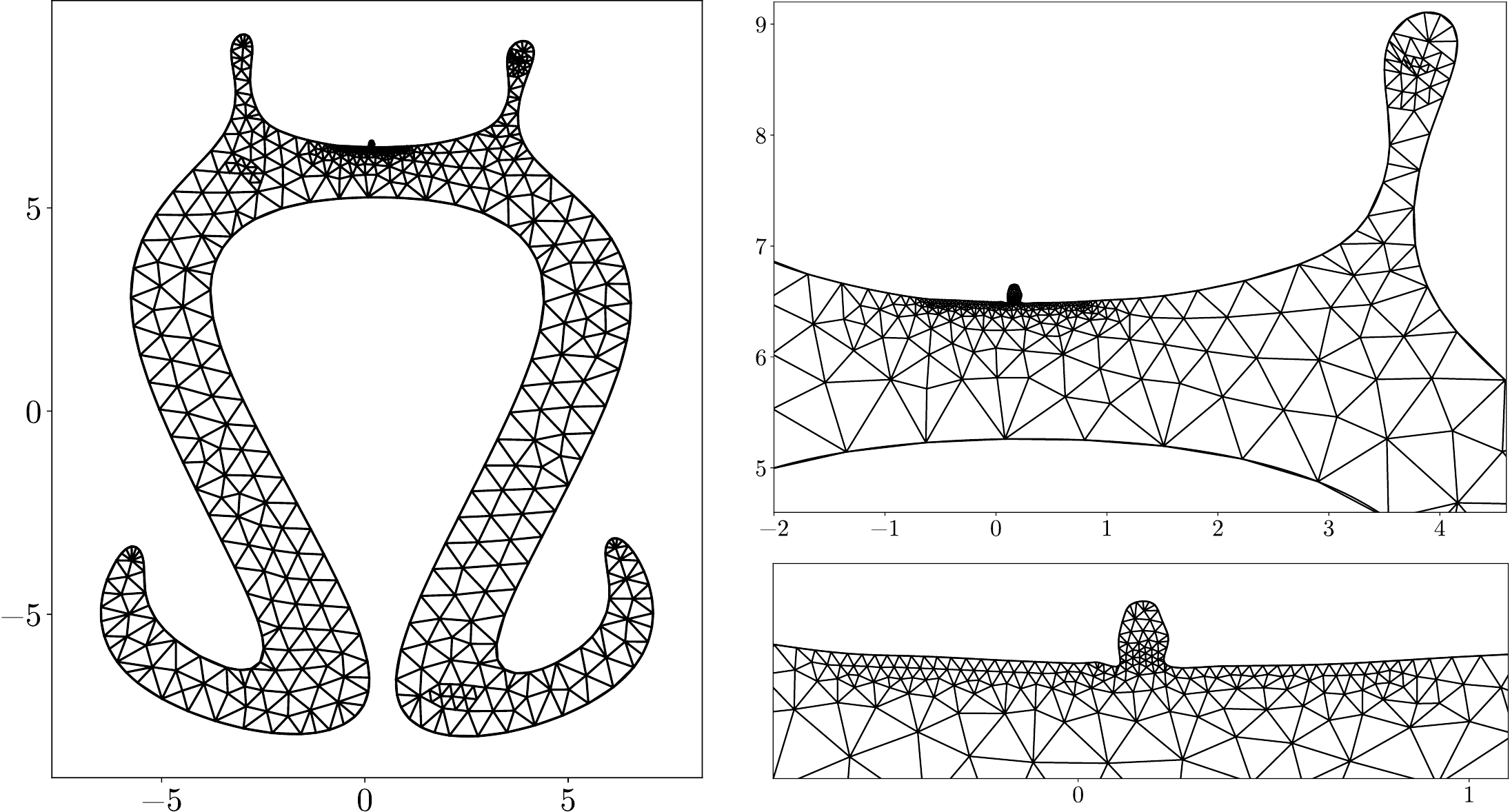}
      \caption{Error tolerance $= 10^{-5}$}
    \vspace*{0.95em}
    \end{subfigure}
    \begin{subfigure}{0.80\textwidth}
      \centering
      \includegraphics[width=\textwidth]{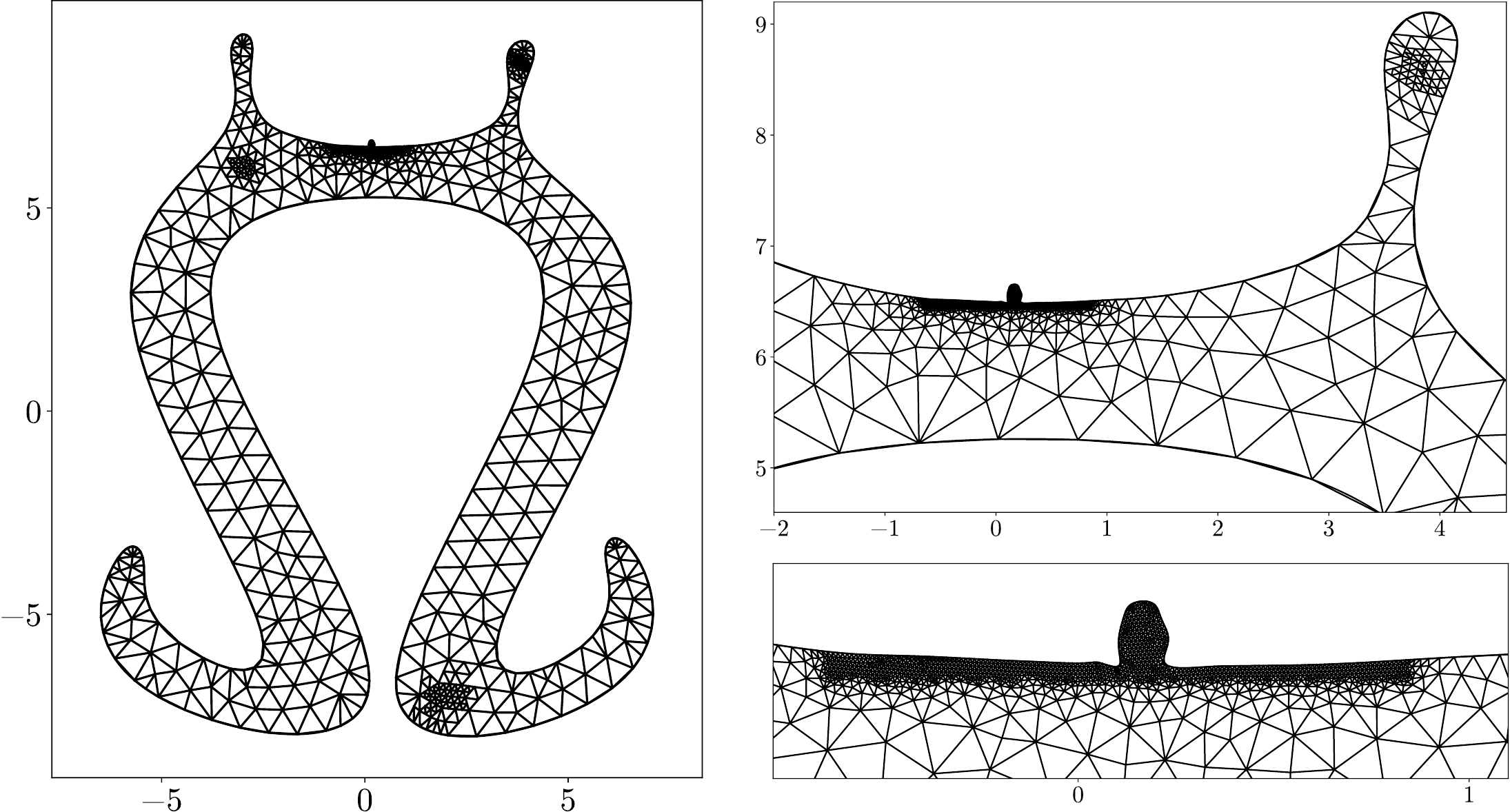}
      \caption{Error tolerance $= 10^{-10}$}
    \end{subfigure}
  \caption{{\bf The meshes used in the Poisson's equation experiments for 
  a multiscale domain and an inhomogeneous density function}.
  }
  \label{fig:pot_domain_fine2}
\end{figure}


\begin{table}[h!!]
    \begin{center}
    \begin{tabular}{ccccccccccc}
    $E_{\text{tol}}$ & $N_{\text{elem}}$ & $A^{\max}$ &
    $T_{\text{geom}}$&$T_{\text{init}}$& $T_{\ft}$& $T_{\nt}$ & $T_{\st}$ &
    $T_{\text{tot}}$ & $\frac{\#\text{tgt}}{\text{sec}}$ & $E_{\text{poi}}$  \\
    \midrule
    \addlinespace[.5em]
     $10^{-5}$& 1076 & 5.05 &0.15 &  0.19  & 0.60 & 0.19 & 0.15& 1.27 &    1.02\e{5} &
    5.75\e{-5}\\
    \addlinespace[.25em]
       $10^{-10}$ & 3258 & 2.41 &1.59 & 0.51  & 1.88 & 1.93& 0.72& 6.63&
    6.09\e{4} & 8.57\e{-10}\\
    \end{tabular}
    \caption{
    {\bf Performance of our algorithm for solving Poisson's equation when
    the domain has a multiscale feature and when the density function is
    inhomogeneous}. In this experiment, $N_{\text{ord}}=14$. We note that
    the time spent on the geometric algorithms (i.e., $T_{\text{geom}}$) is
    substantially higher than the one in the previous examples. This is
    because our geometric algorithm is not optimized for multiscale meshes.
    The near field interaction computation takes much longer for the same
    reason.
    }
    \label{tab:poi2}
    \end{center}
\end{table}

\section{Conclusions and further directions}
In this paper, we present a simple and efficient high-order algorithm for
the rapid evaluation of Newtonian potentials over a general planar domain.
Furthermore, we provide a justification for employing a monomial basis in
the context of high-order (up to order 20) bivariate polynomial
interpolation. This choice serves as a crucial component of our algorithm,
despite being commonly regarded as infeasible.

We note that our algorithm can be generalized to compute the Helmholtz (or
Yukawa) volume potential, provided that the anti-Helmholtizian (or
anti-Yukawaian) of a bivariate polynomial can be approximated accurately,
and that the Helmholtz (or Yukawa) layer potentials over the boundaries of
mesh elements can be efficiently computed to high accuracy (see, for example,
\cite{helsing2,fry0}).  Furthermore, the use of polynomial
interpolation in the monomial basis and Green's third identity 
generalize in a straightforward way to 3-D domains, but efficient algorithms
for the evaluation of surface Newtonian potentials are an area of active
research.

\section{Acknowledgements}
We are deeply grateful to Travis Askham, Shidong Jiang, Manas
Rachh, and the anonymous referees for their valuable advice and insightful
discussions.

\end{document}